\renewcommand{\geq}{\geqslant}
\newcommand{\R}{\mathds R}
\newcommand{\N}{\mathds N}
\newcommand{\Ee}{\mathds E}
\newcommand{\I}{\mathds 1}
\def\aa{\alpha}
\def\dd{\delta}
\def\d{{\rm d}}
\def\<{\langle}
\def\>{\rangle}
\def\LL{\Lambda}
 \def\tt{\tilde}
 \def\ff{\frac}
 \def\ss{\sqrt}
\def\bb{\beta}
\def\R{\mathbb R}  \def\ff{\frac} \def\ss{\sqrt} 
\def\N{\mathbb N} \def\kk{\kappa} 
\def\dd{\delta}   
\def\<{\langle} \def\>{\rangle}  
\def\d{\text{\rm{d}}} \def\bb{\beta} \def\aa{\alpha} 
 \def\beq{\begin{equation}}  
\def\e{\text{\rm{e}}}    
 \def\tt{\tilde}
  \def\ll{\lambda}
  \def\LL{\Lambda}
\def\to{\rightarrow}
\def\8{\infty}\def\3{\triangle}
\def\1{\lesssim}
\renewcommand{\tilde}{\widetilde}
\newtheorem{theorem}{Theorem}[section]
\newtheorem{lemma}[theorem]{Lemma}
\newtheorem{proposition}[theorem]{Proposition}
\theoremstyle{definition}
\newtheorem{remark}[theorem]{Remark}
\numberwithin{equation}{section}
\begin{document}
\allowdisplaybreaks

\title[Exponential
ergodicity for two-factor affine processes] {Coupling %method
methods and exponential ergodicity for two-factor affine processes}

\author{
Jianhai Bao\qquad\,
\and\qquad
Jian Wang}
\date{}
\thanks{\emph{J.\ Bao:} Center for Applied Mathematics, Tianjin University, 300072  Tianjin, P.R. China. \url{jianhaibao13@gmail.com}}
\thanks{\emph{J.\ Wang:}
College of Mathematics and Informatics \& Fujian Key Laboratory of Mathematical Analysis and Applications (FJKLMAA), Fujian Normal University, 350007 Fuzhou, P.R. China. \url{jianwang@fjnu.edu.cn}}

\maketitle

\begin{abstract}
In this paper, by invoking the
 coupling approach, we establish
 exponential ergodicity under the $L^1 $-Wasserstein distance for two-factor affine processes. The method employed herein is universal in a certain sense
so that it is applicable to
general two-factor affine processes, which allow that the first component solves a general CIR process,
and that there are interactions in the second component,
as well as that the Brownian noises are correlated;
and even to some models beyond two-factor processes.

\medskip

\noindent\textbf{Keywords:} two-factor affine process,  exponential ergodicity,  coupling by reflection,  synchronous
coupling

\noindent \textbf{MSC 2010:} 60G51; 60G52; 60J25; 60J75.
\end{abstract}

\section{Introduction}\label{section1}
An affine process on the state space $D:=\R_+^m\times\R^n$, where
$m,n\in \N_0:=\{0,1,2,\cdots\}$ with $m+n\ge1$,   is a
time-homogeneous Markov process which
satisfies that the logarithm
of characteristic function for the transition distribution of such a
process is affine with respect to the initial state $x\in D$; see
the pioneer work  \cite{DPS}  upon  the general
  theory and \cite{DFS} on succinct mathematical foundations and  complete characterizations of regular
  affine processes. Nowadays, the theory of affine processes has
  been developed in various directions; see e.g.
  \cite{CPGU,FMS,KM,KRST1,KRST2}. Meanwhile,  affine processes have been
  applied   considerably in mathematical finance due to their computational
  tractability and flexibility in capturing many  empirical
  features of financial series; see the book
\cite{A} and references within.

The set of affine processes contains a large class of important Markov processes such as continuous-state branching processes with
 immigration with the state space $D=\R^m_+$, and Ornstein-Uhlenbeck type processes with the state space $D=\R^n$. The long time behaviors
 (for example,  existence and uniqueness
 of the stationary probability measure, ergodicity,  exponential ergodicity, and so on) of those two
  processes have been studied extensively in the literature;
 see e.g.
  \cite{LM,Masuda,SY, W1}. In order to study long time
  behaviors for general affine processes on the canonical state space $D=\R_+^m\times\R^n$, it is natural to start from the following simplest (but interesting and important) two-factor affine process on $\R_+\times \R$ (that is, $m=n=1$):
\begin{equation}\label{WW0}
\begin{cases}
\d Y_t=(a-b Y_t)\,\d t+  Y_{t-}^{{1}/{\bb}} \,\d L_t,&\quad t\ge0,\,Y_0\ge0,
\\
\d X_t=(\kappa-\lambda X_t)\,\d t+ Y_{t-}^{1/\aa}\,\d Z_t,&\quad t\ge0,\,X_0\in \R,
\end{cases}
\end{equation}
where $a\ge 0$, $b,\kappa,\lambda\in
\R,$ $\beta, \aa\in (1,2]$, $(L_t)_{t\ge0}$ is a
  spectrally  positive $\bb$-stable process with the L\'{e}vy measure
$\nu_\bb(\d z):=C_\bb z^{-(1+\bb)}\I_{\{z>0\}}\d z$ with $C_\bb=(\bb\Gamma(-\bb))^{-1}$ (where $\Gamma$ denotes the Gamma
 function) in case of
  $\bb\in (1,2)$, a standard Brownian motion (which will be denoted by $(B_t)_{t\ge0}$ later) in case of
    $\bb=2$, and
  similarly $(Z_t)_{t\ge0}$ is a spectrally  positive $\aa$-stable process in case of
  $\aa\in (1,2)$,
  a standard Brownian motion in case of
   $\aa=2$. We further assume that $(L_t)_{t\ge0}$ and $(Z_t)_{t\ge0}$ are  mutually independent.

There are a few of
results on this direction, where $(Z_t)_{t \ge0}$ in  \eqref{WW0} is a standard Brownian motion (i.e.,
$\aa=2$). The existence of a unique stationary distribution was
addressed in \cite{BDLP}, and furthermore the exponential ergodicity
for the case that $(L_t)_{t\ge0}$ is a standard Brownian motion
(i.e., $\aa=2$) was also investigated therein.  Subsequently, the
corresponding results derived in \cite{BDLP} were extended to a much more
general setup in \cite{BP} (see (1.1) therein for more details).
Later, \cite{JKR} complemented the result on the exponential
ergodicity in \cite{BDLP} by allowing $(L_t)_{t\ge0}$ to be any
spectrally  positive $\bb$-stable process with $\bb\in(1,2).$ As
mentioned in \cite{BDLP}, once the existence of a unique stationary
distribution and the exponential ergodicity of the two-factor
affine process
solved by the SDE \eqref{WW0} are available, the asymptotic analysis
of estimators for parameter estimations (e.g.\ least squares
estimation and maximum-likelihood estimation)  via certain
(discrete-time or continuous-time) observations \cite{BDLP14,BP19}
can be implemented.

However, ergodic properties  of the SDE \eqref{WW0} for the case that $(Z_t)_{t\ge0}$ is a spectrally  positive $\aa$-stable process with
$\aa\in(1,2)$ are still open. We will fill the gap in this paper. To taste our contribution, we herein  state the result for the following SDE  on $\R_+\times \R$:
\begin{equation}\label{WW1}
\begin{cases}
\d Y_t=(a-b Y_t)\,\d t+  Y_{t}^{{1}/{2}} \,\d B_t,&\quad t\ge0,\,Y_0=y\ge0,\\
\d X_t=(\kappa-\lambda X_t)\,\d t+ Y_{t-}^{1/\aa}\,\d Z_t,&\quad t\ge0,\,X_0=x\in \R,
\end{cases}
\end{equation}where $a\ge 0$, $b,\kappa,\lambda\in
\R,$ $(B_t)_{t\ge0}$ is a standard Brownian motion, and $(Z_t)_{t\ge0}$ is an independent
spectrally  positive $\aa$-stable process with $\aa\in (1,2]$.
 In terms of
\cite[Theorem 2.1]{BDLP}, the SDE \eqref{WW1} has a unique strong solution $(Y_t,X_t)_{t\ge0}$.
Let $P(t, (y,x),\cdot)$ be the transition probability kernel of the process $(Y_t,X_t)_{t\ge0}$ with the initial value $(y,x)$.

For a strictly increasing  function $\psi$ on $\R^2_+$ and two
probability measures $\mu_1$ and $\mu_2$ on $\R_+\times \R$, define
$$ W_{\psi}(\mu_1,\mu_2)=\inf_{\Pi\in\mathscr{C}(\mu_1,
\mu_2)}\int_{\R_+^2\times \R^2} \psi(|y-\tt y|, |x-\tt x|)\,\Pi(\d
y,\d \tt y, \d x , \d \tt x), $$ where $\mathscr{C}(\mu_1, \mu_2)$
is the collection of all probability
measures on $\R_+^2\times \R^2$
with marginals $\mu_1$ and $\mu_2$. When $\psi$ is concave, the
above definition yields a Wasserstein distance $W_\psi$ in
the space of probability measures $\mu$ on $\R_+\times \R$ such  that
$\int_{\R_+\times \R}\psi(y,|x|)\,\mu(\d y,\d x)<\infty$. If
$\psi(s,t)=(s^2+t^2)^{1/2}$ for all $s,t\geq0$, then $W_\psi$ is the
standard $L^1$-Wasserstein distance, which will be  denoted simply by $W_1$.
Another well known example for $W_\psi$ is given by
$\psi(s,t)=\I_{\R_+^2\setminus\{(0,0)\}}(s,t)$, which leads to the
total variation distance $$W_\psi(\mu_1,
\mu_2)=\frac{1}{2}\|\mu_1-\mu_2\|_{\rm Var}:=\frac{1}{2}
[(\mu_1-\mu_2)^+(\R_+\times \R)+(\mu_1-\mu_2)^-(\R_+\times \R)]$$
where $(\mu_1-\mu_2)^+$ and $(\mu_1-\mu_2)^-$ stand respectively for the positive part and the negative part for the Jordan-Hahn decomposition of the signed measure $\mu_1-\mu_2.$

\begin{theorem}\label{T:main} Let $(Y_t, X_t)_{t\ge0}$ be the unique strong solution to \eqref{WW1}. If $b>0$ and $\lambda>0$, then the process $(Y_t,X_t)_{t\ge0}$ is exponentially ergodic with respect to the $L^1$-Wasserstein distance $W_1$, i.e., there is a constant $\eta_0>0$ such that for all $y\ge0$, $x\in \R$ and $t>0$,
\begin{equation}\label{YY}
W_{1} (P(t, (y,x),\cdot), \mu)\le C_0(y,x){\rm e}^{-\eta_0 t},
\end{equation}
where $C_0(y,x)>0$ is independent of $t$.
\end{theorem}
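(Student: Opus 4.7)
The plan is to construct a Markov coupling $((Y_t,X_t),(\tilde Y_t,\tilde X_t))$ of two copies of the process from \eqref{WW1} starting at $(y,x)$ and $(\tilde y,\tilde x)$, and then to estimate a carefully chosen semimetric along this coupling. In line with the paper's title, the coupling is hybrid: a synchronous coupling for the slow $Y$-component (the same Brownian driver $B$ for both copies), and a reflection-type coupling for the fast $X$-component (a reflected copy $\tilde Z$ of the $\alpha$-stable driver for $\tilde X$).

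For the $Y$-component the Yamada-Watanabe approximation of $|\cdot|$, combined with the linear drift $-bY$ and the $1/2$-Hölder diffusion coefficient, yields the $L^{1}$-contraction
\[
\E|Y_t-\tilde Y_t|\leq e^{-bt}|y-\tilde y|,\qquad t\ge 0,
\]
because after smoothing the martingale part vanishes in expectation while the diffusive remainder is $O(1/n)$ uniformly in time; the drift then integrates to $-b\E|Y_s-\tilde Y_s|$ and Gronwall closes the loop. This is the step that uses the hypothesis $b>0$.

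For the $X$-component set $\pi_t:=X_t-\tilde X_t$. When $Y_t=\tilde Y_t$ the diffusion coefficients coincide and the reflection coupling of the $\alpha$-stable noise acts symmetrically on $\pi_t$; for a concave, bounded, strictly increasing $f\colon[0,\infty)\to[0,\infty)$ with $f(0)=0$ and the correct behaviour at the origin, the generator then satisfies $\mathscr L f(|\pi_t|)\leq -\eta_1 f(|\pi_t|)$, with contributions coming both from the drift $-\lambda\pi_t f'(|\pi_t|)$ (using $\lambda>0$) and from the reflected jumps of the $\alpha$-stable noise. When $Y_t\neq\tilde Y_t$ the mismatched coefficients $Y_t^{1/\alpha}$ and $\tilde Y_t^{1/\alpha}$ produce a remainder controlled, up to a constant, by a power of $|Y_t-\tilde Y_t|$. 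Bundling these contributions, one introduces a semimetric of the form
\[
V(y,\tilde y,x,\tilde x)=c_1|y-\tilde y|+f(|x-\tilde x|)
\]
and verifies $\mathscr L V\leq -\eta V+R$, where the error $R$ is dominated by $|y-\tilde y|$ and hence vanishes exponentially by the previous step. Gronwall then gives $\E V(t)\leq C(V(0)+|y-\tilde y|)e^{-\eta t}$; combined with a standard Lyapunov bound on $\E(Y_t+|X_t|)$ (to pass from the bounded semimetric $V$ to the genuine $L^{1}$-Wasserstein distance on $\R_+\times\R$) and the existence and uniqueness of a stationary distribution $\mu$ already available in the literature, \eqref{YY} follows by the triangle inequality.

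The main obstacle is this third step: choosing the concave function $f$ and tuning the constants so that the reflection-coupling contribution dominates the $Y$-mismatch error uniformly in $(y,\tilde y,x,\tilde x)$. Near $|\pi|=0$ the reflected-jump integral is governed by the behaviour of the $\alpha$-stable L\'evy measure, which forces $f$ to be essentially linear at the origin with a prescribed rate, whereas global boundedness of $f$ is what lets us absorb the mismatch error. Reconciling these two constraints, together with the correct scaling of the coefficient $Y_{t-}^{1/\alpha}$ inside the jump integrand and the borderline case $\alpha=2$ (where reflection coupling for Brownian noise must be handled separately), is where the technical core of the proof concentrates.
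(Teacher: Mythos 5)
Your proposal has the coupling assignment backwards, and the error analysis does not close as stated.

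First, the coupling. You propose a \emph{synchronous} coupling for the $Y$-component and a \emph{reflection-type} coupling for the $\alpha$-stable driver $Z$ of $X$. But $Z$ is spectrally positive; its L\'evy measure is supported on $(0,\infty)$, so there is no ``reflected copy'' $\tilde Z$ with the same law as $Z$ — reflecting the jumps of a one-sided stable process changes its distribution. Reflection coupling is only available for the noise that is symmetric (or rotation-invariant), which here is the Brownian driver $B$ of the $Y$-equation. The paper does exactly the opposite of what you suggest: reflection coupling (for small distances) on the $Y$-component and \emph{synchronous} coupling on the $X$-component, which is the only direction compatible with one-sided jumps. Moreover, since $\lambda>0$ already gives a deterministic contraction for $X_t-\tilde X_t$ once $Y_t=\tilde Y_t$ (indeed $\d(X_t-\tilde X_t)=-\lambda(X_t-\tilde X_t)\,\d t$ thereafter), there is no need for reflection on $X$ at all.

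Second, and more substantively, the Lyapunov estimate you sketch does not hold. Taking $V=c_1|y-\tilde y|+f(|x-\tilde x|)$ with $f$ concave, bounded and strictly increasing, the jump mismatch term acting on the $f$-part, in the case $x<\tilde x$ with $v:=\tilde x-x$ small, contains contributions of the form
\[
(y-\tilde y)\int_{\{z\gtrsim v\}}\bigl(f(|z-v|)-f(v)+f'(v)z\bigr)\,\nu_\alpha(\d z),
\]
which scale like $(y-\tilde y)\,v^{1-\alpha}$ (from the $f'(v)z$ compensator against $\nu_\alpha$) and even $(y-\tilde y)\,v^{-\alpha}$ (from the tail of $\nu_\alpha$ against the bounded increment of $f$). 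Neither is dominated by $C|y-\tilde y|$ uniformly in $v$, so your remainder $R$ cannot be absorbed by $-c_1 b|y-\tilde y|$. This is precisely the obstruction the paper records in Remark~\ref{R-P}(ii), where it is shown that with $G_0(s,t)=s+t$ the desired differential inequality fails because of a term of order $(y-\tilde y)(\tilde x-x)^{1-\alpha}$. The paper circumvents this with three ingredients your proposal lacks: (a) an interpolating function $F(s,t)$ that equals $s$ when $t\le s$, so the dangerous regime $|x-\tilde x|\ll y-\tilde y$ makes the metric insensitive to $x-\tilde x$ altogether; (b) the extra term $c(s+s^\theta)$ in $V_{c,\theta}$ supplying a compensating power $(y-\tilde y)^{2-\alpha}$; and (c) reflection coupling on $Y$ for $0<y-\tilde y<1$, which produces the negative term $-\tfrac12 c\theta(1-\theta)(\sqrt y+\sqrt{\tilde y})^2(y-\tilde y)^{\theta-2}$ needed to kill the leftover intermediate-scale estimates (Lemma~\ref{L:lem1}, \eqref{E0}). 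Without all three, the estimate does not close. Your contraction $\E|Y_t-\tilde Y_t|\le e^{-bt}|y-\tilde y|$ via synchronous coupling and Yamada--Watanabe is correct but, by itself, insufficient: the paper needs the contraction \emph{inside the coupled generator}, jointly with the $X$-part, because the cost function $F(y-\tilde y,|x-\tilde x|)$ entangles the two components.
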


We emphasize that, to investigate ergodicity of the two-factor
affine processes, the Foster-Lyapunov criteria  
\cite{MT} developed 
by Meyn-Tweedie
 was adopted in \cite{BDLP,BP,JKR},
where the
key ingredient is to examine the irreducibility of skeleton chains.
Whereas, in general, it is a cumbersome task to check the
irreducible property. Instead of the approach above, in this work we shall
take advantage of the probabilistic coupling argument. Compared with
\cite{BDLP,BP,JKR}, the coupling method enjoys several advantages.
For instance, it avoids verification of irreducible property of
skeleton chains;
on the other hand,  it is universal in a certain
sense  that all the frameworks in \cite{BDLP,BP,JKR} can be handled.

The coupling technique has been applied to study the exponential ergodicity  under both the $L^1$-Wasserstein distance and the total variation norm
in \cite{Lwcw, Maj}  for SDEs
driven by L\'evy noises, and in \cite{LW}
for general continuous-state nonlinear branching processes, which in particular include continuous-state branching processes (that is,
typical class of
affine processes on $\R_+$).
 In contrast to
 these quoted papers, there are essential differences to realize
the coupling approach for two-factor processes we are concerned with  in the present paper.
Roughly speaking, we do not consider  directly the coupling of two-factor processes. Whereas we first couple
the first component. For the two-factor process solved by \eqref{WW1}, the marginals of the coupling process for the first component will stay together once they meet at the first time.
 Also due to the structure of \eqref{WW1}, we then  deal with the coupling concerning the second component as well. From the point of view above,
  the constructions of 
  proper coupling processes  and appropriate Lyapunov functions as well as their refined  estimates, which are crucial to adopt the coupling approach,
   require
    much more effort than that in \cite{LW, 
    Lwcw, Maj}. See Remarks \ref{E:ggg} and \ref{R-P} for more details.

Recently there are a few of developments on the topics related with ergodicty of affine processes. For example, see \cite{JKR1} for the existence of limit distributions for affine processes, \cite{FJR} for  exponential ergodicity in Wasserstein
distances for affine processes, and \cite{FJKR1,MS} and the references therein for exponential/geometric ergodicity of affine processes on cones, and so on.
In particular,
the exponential ergodicity of affine processes in terms of
suitably chosen Wasserstein distances has
been established in \cite[Theorem 1.5]{FJR} under
the first moment condition on the state-dependent and log-moment conditions on the state-independent jump measures, respectively.
Similar ideas have been used in \cite{FJKR} to study the exponential ergodicity for SDEs of nonnegative processes with jumps.
Here are two main differences between the approach of \cite[Theorem 1.5]{FJR} and  the counterpart 
in our paper.
\begin{itemize}
\item[(i)] Applying \cite[Theorem 1.5(a)]{FJR} to the setting of Theorem \ref{T:main}, one may get the exponential ergodicity of the two-factor process  defined by \eqref{WW1}  in terms of the $L^1$-Wasserstein distance $W_1$. The intermediate key step to yield \cite[Theorem 1.5(a)]{FJR} is \cite[Proposition 7.3]{FJR}, which claims that there exist constants $K$ and $\delta>0$ such that for all $y,\tt y\in \R_+$, $x,\tt x\in \R$ and $t>0$,
    $$
        W_{\psi^*} (P(t,(y,x),\cdot), P(t,(\tt y,\tt x),\cdot))\le K \e^{-\delta t}  \psi^*(|y-\tt y|,|x-\tt x|),$$ where
$$\psi^*(u,v):=(u+ u^{1/2}) +v,\quad u,v\ge0;$$ see \cite[(7.2)]{FJR}. In the present paper, we indeed can verify that for any $\theta\in(0,1)$, there are positive constants $\eta:=\eta(\theta)$ and $C:=C(\theta)$ such that for all $y,\tt y\in \R_+$, $x,\tt x\in \R$ and $t>0$,
$$
W_{\psi_\theta}   (P(t,(y,x),\cdot), P(t,(\tt y,\tt x),\cdot))\le C \e^{-\eta t} \psi_\theta|y-\tt y|,|x-\tt x|),
$$
where
$$\psi_\theta(u,v):=(u+ u^\theta)+ v,\quad u,v\ge0;$$ see \eqref{e:rrr} below.

\item[(ii)] The proof of \cite{FJR} is based on
the characterization of affine processes, see e.g. the proof of the crucial statement \cite[Proposition 6.1]{FJR}.
 However, our approach does not rely heavily on
 the structure of two-factor processes.
 Indeed, our argument still works for
 some models beyond two-factor processes; see Subsection \ref{subsection4.3}.
\end{itemize}
\ \

The remainder of this paper is arranged as follows. In Section \ref{sec2}, we overview the existing result on existence and uniqueness of non-negative solutions to \eqref{WW1}, reveal that the first order moment of solutions to \eqref{WW1} is finite,
and construct the coupling operator by applying  the coupling by reflection for a small distance and the synchronous
coupling for a big distance to  the
first component and the synchronous
coupling to  the second component in \eqref{WW1}. The Section \ref{Section3} is devoted to the proof of Theorem \ref{T:main} via the coupling approach and by constructing appropriate Lyapunov functions. In Section \ref{Section4}, we aim to apply the ideas adopted in
 Sections \ref{sec2} and \ref{Section3} to
 general two-factor affine models, (which allow that the first component solves a general CIR process, that there are interactions in the second component,
and even that the Brownian noises are correlated), as well as some models beyond two-factor models.

\section{Coupling for two-factor processes}\label{sec2}

\subsection{Preliminary:
existence and uniqueness of strong solutions
}\label{section2.1}
In this part, we recall some known results on
existence and uniqueness of strong solutions to the SDE \eqref{WW1}, and we also investigate moment estimates for the corresponding solution.

The existence and uniqueness of non-explosive strong solutions
 to the SDE \eqref{WW1}
follows
 essentially from \cite[Theorem 2.1]{BDLP}. Roughly speaking, the pathwise uniqueness of non-negative strong solution
 $(Y_t)_{t\ge0}$ of the first equation in \eqref{WW1} is due to  the well-known Yamada-Watanabe approximation approach. Once the first component
$(Y_t)_{t\ge0}$ is available, the second component $(X_t)_{t\ge0}$ solved  by the second equation in \eqref{WW1} is indeed  a one-dimensional Ornstein-Uhlenbeck type process.

According to the It\^{o} formula, the infinitesimal generator of the process $(Y_t,X_t)_{t\ge0}$ associated with the SDE
\eqref{WW1} is given by
\begin{equation}\label{e:gene}
\begin{split} (L f)(y,x)= & (a-by) \partial_1 f(y,x)+\frac{y}{2} \partial_{11} f(y,x)+(\kappa-\lambda x) \partial_2 f(y,x)\\
&+y\int_{\R}\left(f(y,x+z)-f(y,x)-\partial_2 f(y,x)z\right)\,\nu_\alpha(\d z)
\end{split}
\end{equation} for any $f\in C^2_b(\R_+\times \R)$. Here and below, $\partial_if$ stands for the first order
derivative w.r.t.\ the $i$-th component, and $\partial_{ij}f$ means
the second order derivative w.r.t.\ the $i$-th component followed by
the $j$-th component.

Now, we take $W(y,x)=1+y+h(x)$, where $0\le h\in C^2(\R)$ such that $h(x)=|x|$ for all $|x|\ge2$ and $\|h'\|_\infty+\|h''\|_\infty<\infty.$ Then, according to \eqref{e:gene}, it follows that
\begin{align*}(LW)(y,x)= & a-by
+(\kappa -\lambda x)h'(x)+y\int_{\R}\left(h(x+z)-h(x)-h'(x)z\right)\,\nu_\alpha(\d z)\\
\le & a+|b| y+ \|h'\|_\infty (|\kappa|+|\lambda|\cdot |x|)\\
&+\frac{1}{2}y\|h''\|_\infty \int_{\{|z|\le 1\}}z^2\,\nu_\alpha(\d z)+ 2 y\|h'\|_\infty \int_{\{|z|\ge1\}}|z|\,\nu_\alpha(\d z)\\
\le & C_0(1+y+h(x))=C_0W(y,x),
 \end{align*} where $C_0>0$ is independent of $y$ and $x$. Hence, for any $(y,x)\in \R_+\times \R$ and $t>0$,
 $$\Ee^{(y,x)} W(Y_t,X_t)\le W(y,x)+C_0\int_0^t \Ee^{(y,x)} W(Y_s,X_s)\,\d s .$$
  This, along with Gronwall's inequality, yields
   that for any $(y,x)\in \R_+\times \R$ and $t>0$,
 $$\Ee^{(y,x)} W(Y_t,X_t)\le W(y,x) \e^{C_0t}.$$ In particular, the first order moment of $(Y_t,X_t)_{t\ge0}$ is finite,
 i.e., for any $(y,x)\in \R_+\times \R$ and $t>0$,
\begin{equation}\label{e:mom}\Ee^{(y,x)}( Y_t+|X_t|)<\infty.
\end{equation}

\subsection{Markovian coupling for two-factor affine processes}\label{L:order}
Now, we consider the following SDE on $\R_+\times \R$:
\begin{equation}\label{Cou1}
\begin{cases}
\d \tt Y_t=\begin{cases}(a-b \tt Y_t)\,\d t+(-\I_{\{0<|Y_t-\tt Y_t|<1\}}+\I_{\{Y_t-\tt Y_t|\ge1\} })\tt Y_{t}^{1/2}\d B_t,&\,\, t< T_Y,\\
(a-b \tt Y_t)\,\d t+\tt Y_{t}^{1/2}\d B_t,&\,\,t\ge T_Y,\end{cases}\\
\d \tt X_t=(\kappa-\lambda \tt X_t)\,\d t+ {\tt
Y_{t-}}^{1/\alpha}\,\d L_t,
\end{cases}
\end{equation} with the initial value $(\tt Y_0,\tt X_0)=(\tt y,\tt x)\in \R_+\times\R $, where $${T_Y}:=\inf\{t\ge0: Y_t=\tt Y_t\}.$$
Define $$B_t^*=\begin{cases}B_0+\displaystyle \int_0^t \Big(-\I_{\{0<|Y_s-\tt Y_s|<1\}}+\I_{\{Y_s-\tt Y_s|\ge1\} }\Big)\,\d  B_s,&\quad 0\le t\le T_Y,\\
B_{T_Y}^*+B_t-B_{T_Y},&\quad t\ge T_Y.\end{cases}$$ Then,
$$\d\tt Y_t=(a-b \tt Y_t)\,\d t+ \tt Y_{t}^{1/2}\d B_t^*.$$  Since $(B_t^*)_{t\ge0}$ is
still a standard Brownian motion and
the SDE \eqref{WW1} has a
unique strong solution, the SDE \eqref{Cou1} also admits a unique
strong solution $(\tt Y_t,\tt X_t)_{t\ge0}$ so that $Y_t=\tt Y_t$ for all $t\ge T_Y$;
moreover, the process $(\tt Y_t, \tt X_t)_{t\ge0}$ enjoys the same
law (i.e.\  the
same
transition
probabilities) as that of
$(Y_t,X_t)_{t\ge0}$. In particular, $((Y_t,X_t),(\tt Y_t,\tt
X_t))_{t\ge0}$ is a non-explosive coupling of the process $(Y_t,X_t)_{t\ge0}$.
Roughly speaking, before two marginal processes meet we will apply the coupling by reflection for the
first component $(Y_t)_{t\ge0}$ when the distance of them is less than $1$  and the synchronous
coupling when the distance of them is large or equal to $1$, and
once two marginal processes meet we will adopt the synchronous
coupling; while we always take the synchronous
coupling for the second component $(X_t)_{t\ge0}$.
We remark
that, due to the continuity of $t\mapsto Y_t$ and $t\mapsto \tt Y_t$ almost surely and $Y_t=\tt Y_t$ for all $t\ge T_Y$, the coupling of the first component process preserves the order
property; that is, $Y_t\ge \tt Y_t$ for any $t\ge0$ when $Y_0\ge \tt
Y_0.$

Furthermore, it is not hard to see that the generator (later we call it the coupling operator of the generator $L$ given by
\eqref{e:gene}) of the coupling process  $((Y_t,X_t),(\tt Y_t,\tt X_t))_{t\ge0}$ is given by
\begin{equation}\label{L2}
\begin{split}&(L^*f)(y,\tt y, x, \tt x)\\
&=(a-b y)\partial_1f(y,\tt y, x,\tt x)+\frac{y}{2}\partial_{11} f(y,\tt y, x,\tt x)\\
&\quad+(a-b \tt y)\partial_2f(y,\tt y, x,\tt x)+\frac{\tt y}{2}
\partial_{22}f(y,\tt y, x,\tt x)\\
&\quad-\sqrt{y
\tt y}\partial_{12}f(y,\tt y, x,\tt x)\I_{\{0<y-\tt y<1\}}+\sqrt{y
\tt y}\partial_{12}f(y,\tt y, x,\tt x)\I_{\{  y-\tt y\ge1\}\cup \{y=\tt y\}}\\
&\quad+(\kappa-\lambda x)\partial_3f(y,\tt y, x,\tt x)+(\kappa-\lambda \tt x)\partial_4f(y,\tt y, x,\tt x)\\
&\quad +\tt y \int_0^\infty  (f(y,\tt y, x+z,\tt x+z)-f(y,\tt y,x,\tt x)\\
&\quad~~~~~~~~~~~~-\partial_3f (y,\tt y,x,\tt x)z-\partial_4f (y,\tt y,x,\tt x)z )\,\nu_\aa(\d z)\\
&\quad +(y-\tt y)\int_0^\infty\left(f(y,\tt y, x+z,\tt x)-f(y,\tt
y,x,\tt x)-\partial_3f (y,\tt y,x,\tt x)z\right)\,\nu_\aa(\d z)
\end{split}
\end{equation} for any $f\in C^2(\R_+^2
\times \R^2)$ and $(y,\tt y, x,\tt x)\in \R_+^2\times \R^2$ with
$y\ge\tt y$.
Similarly, we can write the expression of $(L^*f)(y,
\tt y, x, \tt x)$ for $f\in C^2(\R_+^2
\times \R^2)$ and $(y,\tt y, x,\tt x)\in \R_+^2\times \R^2$ with $y
\le \tt y$.

\section{Exponential convergence in the Wasserstein distance
}\label{Section3}
This section is devoted to the proof of Theorem \ref{T:main}.
Throughout this section, we shall fix
$\aa\in
(1,2)$, which is the stability index  for the spectrally positive
$\aa$-stable process $(Z_t)_{t\ge0}$ in the second equation of
\eqref{WW1}. Below,  let $\nu_\aa(\d z)$ be the associated L\'evy measure of $(Z_t)_{t\ge0}$.

We begin with the following simple lemma.

\begin{lemma}\label{L:lem1} Let
   $g\in
C_b^2(\R_+)$ such that $0\le g\le 1$. Then, for the function
\begin{equation}\label{EE3}
F(s,t):=\left(1-g\left({t}/{s}\right)\right)s +g\left({t}/{s}\right)t,\quad s,t>0,
\end{equation}
it holds that
\begin{equation}\label{E6}
 |\partial _iF(s,t)|\le c_0,\quad
|\partial_{ii}F(s,t)|\le c_0 s^{-1},\qquad 1 \le
{t}/{s}\le 2,~i=1,2,
\end{equation}
where $c_0>0$ is independent of $s,t$, and  $\partial _iF$
$($resp. $\partial_{ii}F$$)$ stands for the first $($resp.
second$\, )$ order derivative w.r.t. the $i$-th component of the
function $F.$
\end{lemma}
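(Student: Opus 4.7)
The plan is to reduce $F$ to a one-variable function via the scale-invariant substitution $u:=t/s$. Writing
$$F(s,t)=s\,\phi(u),\qquad \phi(u):=1-g(u)+g(u)u=1+g(u)(u-1),$$
the dependence on $s$ factors out completely, and every derivative of $F$ will turn into a derivative of $\phi$ times a simple power of $s$. This is the natural move because $F$ is positively homogeneous of degree one in $(s,t)$.

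I would next compute the partial derivatives directly. Using $\partial_s u=-t/s^2=-u/s$ and $\partial_t u=1/s$, one gets
\begin{align*}
\partial_t F(s,t) &= \phi'(u),\\
\partial_s F(s,t) &= \phi(u)-u\phi'(u),\\
\partial_{tt} F(s,t) &= \phi''(u)/s,\\
\partial_{ss} F(s,t) &= u^2\phi''(u)/s.
\end{align*}
So the whole lemma is reduced to bounding $\phi$, $\phi'$, and $\phi''$ on the interval $u\in[1,2]$. Differentiating gives $\phi'(u)=g(u)+g'(u)(u-1)$ and $\phi''(u)=2g'(u)+g''(u)(u-1)$, which on $[1,2]$ are bounded by a constant depending only on $\|g\|_\infty$, $\|g'\|_\infty$, $\|g''\|_\infty$ (all finite by the hypothesis $g\in C^2_b$). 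Since $0\le g\le 1$ and $u-1\in[0,1]$, $\phi$ itself is bounded, and therefore $|\phi(u)-u\phi'(u)|$ is bounded on $[1,2]$ as well. Plugging these bounds into the four displayed expressions yields $|\partial_i F|\le c_0$ and $|\partial_{ii}F|\le c_0/s$ for $i=1,2$.

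There is no real obstacle here: once the homogeneity is exploited via $u=t/s$, the estimates are a straightforward chain-rule computation. The only point that deserves a moment of care is the bound for $\partial_s F$, where two terms appear; but the restriction $u\in[1,2]$ keeps $u$ bounded and the $C_b^2$ hypothesis on $g$ keeps the rest under control, so the constant $c_0$ can be taken to depend only on $\|g\|_\infty+\|g'\|_\infty+\|g''\|_\infty$.
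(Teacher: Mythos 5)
Your proof is correct and follows essentially the same route as the paper: both verify the bounds by directly computing the first and second partial derivatives of $F$ and observing that the resulting expressions are controlled by $\|g\|_\infty$, $\|g'\|_\infty$, $\|g''\|_\infty$ on the range $t/s\in[1,2]$. Your factoring $F(s,t)=s\,\phi(t/s)$ via degree-one homogeneity is a cleaner way to organize the identical chain-rule computation (indeed, after substituting $u=t/s$, the paper's four displayed formulas reduce exactly to $\phi'(u)$, $u^2\phi''(u)/s$, $\phi(u)-u\phi'(u)$, and $\phi''(u)/s$), but it is not a different argument.
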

\begin{proof}
A straightforward calculation shows that for any $s,t>0$,
\begin{align*}
\partial_1F
(s,t)&=g'(t/s) {t}{s^{-1}}
+ (1-g(t/s)) -g'({ t}/{s}){t^2}{s^{-2}}, \\
\partial_{11}F
(s,t)&=-g''(t/s){t}^2{s^{-3}}+
2g'({ t}/{s}){t^2}{s^{-3}}+g''({t}/{s}){t^3}{s^{-4}}, \\
\partial_2F
(s,t)&=-g'(t/s) +g'(t/s)t{s}^{-1}+
g(t/s),\\
\partial_{22}F
(s,t)&=-g''(t/s)s^{-1}+g''(t/s)t{s^{-2}}+2
g'(t/s) {s}^{-1}.
\end{align*}
Then,  \eqref{E6} follows by $g\in C_b^2(\R_+)$.
\end{proof}

Next, we will take $g\in C_b^2(\R_+)$ with  $g'\ge0$ such that
\begin{equation}\label{e:test}
g(r)=
\begin{cases}
0,&0\le r<1,\\
(r-1)^{2+\delta},&1<r<{3}/{2},\\
1,&r\ge2
\end{cases}\end{equation} for some constant $\delta>0$. With this choice,
$F(s,t)
=s$ if $t\le s$; $F(s,t)
=t$ if $t\ge2s.$

Now, for any $c>0$ and $\theta\in(0,1)$ we define the function
\begin{equation}\label{v:function}V_{c,\theta}(s, t)=c (s+s^\theta)+ F (s, t),~~s,t\ge0.\end{equation}
 It is clear from $g\in[0,1]$ that for any $c>0$,
\begin{equation}\label{e:testaa}V_{c,\theta}(s,t)\asymp (s\vee s^\theta) +  t,~~s,t\ge0.\end{equation} Herein, we use the shorthand
notation $f\asymp g$ for two non-negative functions $f$ and $g$, which means that there exists a constant $c\ge1$ such that $c^{-1}f\le
g\le c\, f$ on the domain.

Below, concerning $V_{c,\theta}^*(y,\tt y,  x,\tt x):=V_{c,\theta}(y-\tt y, |x-\tt x|)$ for any $(y,\tt y, x,\tt x)
\in \R_+^2\times \R^2$ with $y\ge \tt y$,
we will  simply
write $(L^* V_{c,\theta})(y-\tt y, |x-\tt x|):=  (L^*  V_{c,\theta}^*)(y,\tt y,  x,\tt x)$.
We now have the following statement, which is crucial for the proof of Theorem \ref{T:main}.
\begin{proposition}\label{P:P-1}
For any $\theta\in (0,1)$, there exist constants $c,\zeta>0$ such that for any $y>\tilde y\ge0$
 and $x,\tilde x\in \R$,
\begin{equation}\label{R4}
(L^*V_{c,\theta})(y-\tilde y,|x-\tilde x|)\le -\zeta V_{c,\theta} (y-\tilde y,|x-\tilde x|).
\end{equation}
 \end{proposition}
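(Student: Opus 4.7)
The plan is to write $V_{c,\theta}(u,v) = \phi(u) + F(u,v)$ with $\phi(u) := c(u+u^\theta)$, compute $L^*\phi$ and $L^*F$ separately according to \eqref{L2}, and combine with $c$ chosen large enough. Throughout I set $u = y - \tilde y > 0$ and $v = |x - \tilde x|$; by the evident $x\leftrightarrow\tilde x$ symmetry it suffices to treat the case $x \ge \tilde x$, so that $v = x - \tilde x$. Since $\phi$ depends only on $u$, all $x,\tilde x$-derivatives and both jump integrals vanish on $\phi$. Combining the drift, diagonal diffusion, and the case-dependent cross-term gives
\[ L^*\phi(u) = -bu\,\phi'(u) + \tfrac{1}{2}(\sqrt y + \varepsilon\sqrt{\tilde y})^2\,\phi''(u), \]
with $\varepsilon = +1$ in the reflection regime $\{0<u<1\}$ and $\varepsilon = -1$ in the synchronous regime $\{u\ge 1\}$. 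Since $\phi''(u) = c\theta(\theta-1)u^{\theta-2} < 0$, the identity $(\sqrt y - \sqrt{\tilde y})^2(\sqrt y + \sqrt{\tilde y})^2 = u^2$ yields $(\sqrt y + \sqrt{\tilde y})^2 \ge u \ge (\sqrt y - \sqrt{\tilde y})^2$, and hence the key bound
\[ L^*\phi(u) \le -bc(u + \theta u^\theta) - \tfrac{c\theta(1-\theta)}{2}u^{\theta-1}\I_{\{u<1\}}. \]
The $-cu^{\theta-1}$ contribution in the reflection regime is the extra negative term that will later absorb all ``bad'' contributions coming from $F$.

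For $L^*F$, the synchronous jump integral vanishes because $v$ is preserved by $(x,\tilde x)\mapsto (x+z,\tilde x+z)$ and $(\partial_3 + \partial_4)F = 0$. Only the asymmetric jump $u\,\mathcal{I}(u,v) := u\int_0^\infty[F(u,v+z) - F(u,v) - \partial_v F(u,v)\,z]\,\nu_\alpha(\d z)$ survives. I would first establish the global bounds $|\partial_v F|\le c_0$ and $|\partial_{vv}F|\le c_0/u$: Lemma \ref{L:lem1} handles region (B) $\{u\le v\le 2u\}$; in regions (A) $\{v\le u\}$ and (C) $\{v\ge 2u\}$, $F$ is affine in $v$, so $\partial_{vv}F \equiv 0$. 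Splitting the jump integral at $z = 4u$ and using the Taylor bound $c_0 z^2/(2u)$ for $z \le 4u$ together with the Lipschitz bound $2c_0 z$ for $z > 4u$ yields
\[ u\,\mathcal{I}(u,v) \le Cu^{2-\alpha} \]
uniformly in $v$. The drift and diffusion of $F$ are then estimated region by region: in (A) we have $F = u$ so $L^*F = -bu + u\,\mathcal{I}$; in (C), $F = v$ and also $\mathcal{I} \equiv 0$ (since $v+z\ge 2u$ keeps us in (C)), giving $L^*F = -\lambda v$; in the transition region (B), all first and second derivatives of $F$ are controlled by $c_0$ and $c_0/u$ via Lemma \ref{L:lem1}.

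Combining $L^*V = L^*\phi + L^*F$ region by region and taking $c$ large yields the claim. The main technical obstacle lies in region (B) with $u<1$ (reflection coupling), where $\partial_{uu}F$ has indeterminate sign and the diffusion coefficient $(\sqrt y + \sqrt{\tilde y})^2$ may be arbitrarily large when $y,\tilde y$ are large. The resolution is that the combined $\phi$- and $F$-diffusions satisfy
\[ \tfrac{(\sqrt y + \sqrt{\tilde y})^2}{2}\bigl(\phi''(u) + \partial_{uu}F(u,v)\bigr) \le -\tfrac{c\theta(1-\theta)}{4}\,u^{\theta-1}, \]
provided $c\theta(1-\theta) \ge 2c_0$, because $u^{\theta-2} \ge u^{-1}$ on $(0,1]$ and $(\sqrt y + \sqrt{\tilde y})^2 \ge u$. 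This strongly negative term absorbs both the jump contribution $Cu^{2-\alpha}$ (using $u^{2-\alpha} \le 1 \le u^{\theta-1}$ on $(0,1]$, valid since $\theta < 1 < 3-\alpha$) and the $O(u)$ bounded drift contribution from $F$. In the remaining cases (regions (A) and (C) for any $u$, or region (B) with $u\ge 1$) the term $-bc(u + \theta u^\theta)$ from $L^*\phi$ alone dominates the bounded/linear contributions of $L^*F$ once $c$ is large. A final comparison of the coefficients of $u$, $u^\theta$, and $v$ against $V_{c,\theta}(u,v) = c(u+u^\theta) + F(u,v)$ yields the desired inequality $L^*V_{c,\theta} \le -\zeta V_{c,\theta}$ with $\zeta$ of order $\min(b\theta,\lambda)$.
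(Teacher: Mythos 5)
The overall strategy — writing $V_{c,\theta}=\phi+F$, computing $L^*\phi$ exactly and bounding $L^*F$ region by region, and absorbing the bad terms in region (B) for $u<1$ using the $-cu^{\theta-1}$ contribution from the reflection coupling — is the same as the paper's. Your treatment of $L^*\phi$ (the diffusion coefficient $(\sqrt y\pm\sqrt{\tilde y})^2$ bracketing $u$, and the sign analysis under $\phi''<0$) is correct and matches the paper's computation of $L^*U_{c,\theta}$.

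However, there is a genuine gap: the appeal to ``$x\leftrightarrow\tilde x$ symmetry'' is false. The coupling operator in \eqref{L2} is \emph{not} symmetric in $x$ and $\tilde x$: since $Y_t\ge\tilde Y_t$, the extra jump intensity $(y-\tilde y)\nu_\alpha$ acts \emph{only on $x$}, never on $\tilde x$. Consequently, applied to $G(y-\tilde y,|x-\tilde x|)$, the asymmetric jump term (cf.\ \eqref{E5}) is
\[
(y-\tilde y)\int_0^\infty\Bigl(G(u,|\sigma v+z|)-G(u,v)-\partial_2G(u,v)\,\sigma z\Bigr)\,\nu_\alpha(\d z),\qquad \sigma:=\operatorname{sign}(x-\tilde x),
\]
and this expression is \emph{not} invariant under $\sigma\to-\sigma$. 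Your formula $\mathcal I(u,v)=\int[F(u,v+z)-F(u,v)-\partial_vF(u,v)z]\,\nu_\alpha(\d z)$ is valid only when $x\ge\tilde x$; for $x<\tilde x$ the jump sends $v\mapsto|z-v|$ (which can \emph{decrease} $v$ or cross through zero) and the compensator carries the opposite sign, $+\partial_vF\,z$. This invalidates three of your intermediate claims. (a) The Taylor bound $c_0 z^2/(2u)$ for $z\le 4u$: when $x<\tilde x$ and $v<z\le 4u$, the increment in $v$ is $|z-v|-v=z-2v$, not $-z$, so the linear terms do not cancel and a residual $O(z)$ term survives — the bound $O(z^2/u)$ fails. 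The paper avoids this by splitting at $z=(y-\tilde y)/2\le v/2$ in case {\bf(iii)} and by observing explicitly that then $x+z-\tilde x<0$ whenever $x<\tilde x$, so no crossing occurs in the Taylor regime. (b) The claim ``$\mathcal I\equiv0$ in region (C)'' is false for $x<\tilde x$: the jump can carry $|x+z-\tilde x|$ out of region (C) (the paper's $I_1$ and $I_3$ in \eqref{L3} account precisely for this). (c) In region (A), the useful cancellation is not the one you invoke but the fact that $\partial_vF=0$ and that the integrand vanishes until the \emph{new} distance exceeds $u$, which forces $z>u$ and gives the $u^{1-\alpha}$ tail. In short, the bound $u\,\mathcal I\le Cu^{2-\alpha}$ you assert is ultimately correct, but your argument does not prove it for $x<\tilde x$; you need to split according to the sign of $x-\tilde x$ and, in the $x<\tilde x$ branch, handle the fold-over of $v\mapsto|z-v|$ explicitly, as the paper does in the decompositions $I_1,I_2,I_3$ (case {\bf(i)}), $J_1,J_2$ (case {\bf(ii)}), and $\Lambda_1,\Lambda_2$ with the restriction $x+z-\tilde x<0$ (case {\bf(iii)}).
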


\begin{proof} (1) Let $G
\in C^2(\R_+^2)$. According to the definition of the coupling
operator $L^*$, we find that for any $y\ge \tilde y\ge0$ and $x,\tilde
x\in \R$,
\begin{equation}\label{E5}
\begin{split}
&(L^*G)(y-\tt y,|x-\tt x|)\\
&=-b\partial_1G(y-\tt y,|x-\tt x|)(y-\tt y)\\
&\quad +\ff{1}{2}(\ss
y+\ss{\tt y})
^2 \partial_{11}G(y-\tt y,|x-\tt x|)\I_{\{0<y-\tt y<1\}}\\
&\quad+\ff{1}{2}(\ss
y-\ss{\tt y})
^2 \partial_{11}G(y-\tt y,|x-\tt x|)\I_{\{  y-\tt y\ge1\}}\\
&\quad -\lambda \partial_2G (y-\tt y,|x-\tt x|)|x-\tt x|\\
&\quad+(y -\tt y )\int_0^\infty\bigg(G(y -\tt y ,|x+z-\tt
x|)-G(y-\tt
y,|x-\tt x|)\\
&\quad~~~~~~~~~~~~~~~~~~~~~~ -\partial_2 G (y-\tt y,|x-\tt
x|)\ff{(x-\tt x)z}{|x-\tt x|}\bigg)\,\nu_\aa(\d z),
\end{split}
\end{equation} where we set $\ff{(x-\tt x)z}{|x-\tt x|}=|z|$ if $x =\tt x$, and $\partial_iG$ $($resp.\ $\partial_{ii}G $$)$ stands for the first $($resp.
second$\, )$ order derivative w.r.t. the $i$-th component of the
function $G.$

Below,   let  $y>\tt y\ge0 $ and $x,\tt x\in\R$ be arbitrary. For any $c>0$ and $\theta\in(0,1),$
let  $$U_{c,\theta}(s,t)=c\, (s+  s^\theta),~~~s,t\ge0.
$$
 Then,
 we get from  \eqref{E5} and $ \theta\in(0,1)$  that \begin{equation*}
 \begin{split}
(L^*U_{c,\theta})(y-\tt y,|x-\tt x|)&=-c\, b(1+\theta (y-\tt y)^{\theta-1})(y-\tt
y)\\
&\quad-\ff{1}{2}c\,\theta(1-\theta)(\ss
y+\ss{\tt y})
^2  (y-\tt y)^{\theta-2}\I_{\{0<y-\tt y<1\}}\\
&\quad-\ff{1}{2}c\,\theta(1-\theta)(\ss
y-\ss{\tt y})
^2 (y-\tt y)^{\theta-2}\I_{\{ y-\tt y\ge1\}}\\
&\le -c\, b(y-\tt
y)-c\, b\,\theta (y-\tt y)^\theta\\
&\quad -\ff{1}{2}c\,\theta(1-\theta)(\ss
y+\ss{\tt y})
^2  (y-\tt y)^{\theta-2}\I_{\{0< y-\tt y<1\}}.
\end{split}
\end{equation*}
Thanks to \eqref{v:function}, this yields \eqref{R4}
provided that there is a constant $C>0$ such that
\begin{equation}\label{Y0}
\begin{split}
(L^*F)
(y-\tt y,|x-\tt x|)&\le -\lambda   |x-\tilde
x|  +  2( \ll+ C) (y-\tt y)\\
&\quad+   C(\ss y+\ss{\tt
y})^2(y-\tt y)^{ -1}\I_{\{0<y-\tt y<1\}}
\end{split}
\end{equation}
and  by taking
 \begin{equation*}
c =\ff{4(\ll+C)}{b}+\ff{4C}{\theta(1-\theta)}.
\end{equation*}

Whereas, \eqref{Y0} is available as long as there exists a constant $C >0$ such that
\begin{equation}\label{PP}
\begin{split}
(L^*F)
(y-\tt y,|x-\tt x|)&\le -\lambda |x-\tt
x| \I_{\{|x-\tt x|\ge 2(y-\tt y) \}} +C(y-\tt y)+ \ff{C }{2}    (y-\tt y)^{2-\aa} \\
&\quad +
\ff{C }{2}  (\ss y+\ss{\tt
y})^2(y-\tt y)^{-1}\I_{\{0< y-\tt y<1\}}\\
&\quad+\ff{C }{2} (\ss y-\ss{\tt
y})^2(y-\tt y)^{-1}\I_{\{ y-\tt y\ge1\}}.
\end{split}
\end{equation}
Indeed, \eqref{Y0} holds true since \eqref{PP} implies
\begin{align*}
(L^*F)(y-\tt y,|x-\tt x|)
&\le  -\lambda   |x-\tilde
x|  + 2( \ll+C) (y-\tt y)\\
&\quad+   C(\ss y+\ss{\tt
y})^2(y-\tt y)^{ -1}\I_{\{0<y-\tt y<1\}},
\end{align*}
 where  we used the facts that
\begin{equation}\label{YYY}
(y-\tt y)^{2-\aa}\le (y-\tt y)_{\{y-\tt y\ge1\}} +(\ss y+\ss{\tt
y})^2(y-\tt y)^{ -1}\I_{\{0<y-\tt y<1\}}
\end{equation}
and, for $y-\tt y\ge1$,
\begin{equation*}
(\ss y-\ss{\tt
y})^2(y-\tt y)^{-1} =(\ss y-\ss{\tt
y})/(\ss y+\ss{\tt
y})\le 1\le y-\tt y.
\end{equation*}
So, to achieve the desired assertion \eqref{R4}, it is sufficient to show \eqref{PP} for
 the following three cases:
\begin{enumerate}
\item[{\bf (i)}]$|x-\tt x|\ge 2(y-\tt y) $;
\item[{\bf (ii)}]$|x-\tt x|\le y-\tt y$;
\item[{\bf (iii)}]$ y-\tt y \le |x-\tt x| \le 2(y-\tt y)$.
\end{enumerate}

 \medskip

\noindent {\bf   Proof of  \eqref{PP}   for the case  (i)}.
  In this case,
$F(y-\tt y,|x-\tt x|)= |x-\tt x|$  so that, by
\eqref{E5}, one has
$$
(L^*F) (y-\tt y,|x-\tt x|) =-\lambda |x-\tt
x|+(y-\tt y)(I_1+I_2+I_3),$$ where
\begin{equation}\label{L3}
\begin{split}I_1&:=\int_{\{|x+z-\tt x|< y-\tt y \}}\left(y-\tt
y - |x-\tt x| -  \ff{(x-\tt x)z}{|x-\tt
x|}\right)\,\nu_\aa(\d z),\\
I_2&:= \int_{\{|x+z-\tt x|>2(y-\tt y)\}}\left(   |x+z-\tt x| -
|x-\tt x| -   \ff{ (x-\tt x)z}{|x-\tt
x|}\right)\,\nu_\aa(\d z),\\
I_3&:= \int_{\{y-\tt y\le |x+z-\tt x|\le 2(y-\tt
y)\}}\Bigg[\left(1-g\left(\ff{|x+z-\tt x|}{y-\tt y}\right)\right) (y-\tt
y)  \\
&\qquad + g\left(\ff{|x+z-\tt x|}{y-\tt y}\right)  |x+z-\tt x| -
|x-\tt x| -  \ff{(x-\tt x)z}{|x-\tt
x|}\Bigg]\,\nu_\aa(\d
z).\end{split}\end{equation}

Note that, if $|x+z-\tt x|< y-\tt y $ and $|x-\tt x|\ge2(y-\tt y)$,
then $$ |z|\ge |x-\tt x|-|x+z-\tt x|>|x-\tt x|-(y-\tt y)> y-\tt y.$$
Whence, there exists a constant $c_1>0$ such that
\begin{equation}\label{W1}
 I_1 \le  \int_{\{z>y-\tt y\}}\big(  y-\tt y +
 z \big)\,\nu_\aa(\d z)\\
 \le   c_1 (y-\tt
y)^{1 -\aa}.
\end{equation}

A simple calculation shows that
\begin{equation}\label{W2}
\begin{split}
I_2
&=\int_{\{|x+z-\tt x|>2(y-\tt y)\}}\left(  (x+z-\tt x) -
(x-\tt x) -   z\right) \I_{\{x\ge \tt x\}}\,\nu_\aa(\d z)\\
&\quad + \int_{\{|x+z-\tt x|>2(y-\tt y)\}}\left(  -(x+z-\tt x) +
(x-\tt x) +   z\right) \I_{\{x< \tt x,x-\tt x+z\le 0\}}\,\nu_\aa(\d z)\\
&\quad + 2 \int_{ \{ z >\tt x-x+2(y-\tt y)\}}
( x-\tt x+z )  \I_{\{x<\tt x,x-\tt x+z>0\}}\,\nu_\aa(\d z)\\
&\le2\int_{\{  z\ge4(y-\tt y)\}}
 z    \I_{\{x<\tt x,x-\tt x+z>0\}}\,\nu_\aa(\d z)\le c_2   ( y-  \tt y )^{1-\aa}
\end{split}
\end{equation}
for some constant $c_2>0$,
where in the
 inequality we used $x<\tt x$  and
 $\tt x-x\ge2(y-\tt y)$.

Observe that
\begin{equation}\label{W3}
\begin{split}
I_3
&= \int_{\{y-\tt y\le |x+z-\tt x|\le 2(y-\tt y)\}}
\!\!\!\left(1\!-\!g\left(\ff{|x+z-\tt x|}{y-\tt y}\right)\!\right)(  y\!-\tt y\! -
|x+z-\tt
x|)\,\nu_\aa(\d z)\\
&\quad+\int_{\{y-\tt y\le |x+z-\tt x|\le 2(y-\tt y)\}}\left(   |x+z-\tt x| -
|x-\tt x| -   \ff{ (x-\tt x)z}{|x-\tt
x|}\right)\,\nu_\aa(\d z)\\
&\le2 \int_{\{y-\tt y\le x+z-\tt x\le 2(y-\tt y)\}}
(x-\tt x+z)  \I_{\{x<\tt x,x+z-\tt x>0\}}\,\nu_\aa(\d z)\\
&\le 2 \int_{\{ z\ge 3(y-\tt y)\}}
 z  \,\nu_\aa(\d z)= c_3(y-\tt y)^{1-\aa}
\end{split}
\end{equation}
for some constant $c_3>0,$
where in the first inequality we used $g\in[0,1]$ and $y-\tt y\le |x+z-\tt x|$, and
  the second inequality follows from  $x<\tt x$ and $\tt x-x\ge2(y-\tt y)$.
So, combining \eqref{W1} with   \eqref{W2},
\eqref{W3}
and \eqref{YYY} yields \eqref{PP} for the case {\bf (i)}.

\noindent{\bf Proof of \eqref{PP} for the case (ii)}.  In this case, $F
(y-\tt y,|x-\tt x|)= y-\tt y $. Then, according to
\eqref{E5}, we have
\begin{align*}
&(L^*F )
(y-\tt y,|x-\tt x|)\\
&= -   b(y-\tt
y)\\
&\quad+ (y-\tt y)\bigg\{\int_{\{|x-\tt x+z|>2(y-\tt y)\}}\left(|x-\tt
x+z|- (y-\tt
y)\right)\nu_\aa(\d z)\\
&\qquad\qquad\quad\quad  + \int_{\{y-\tt y<|x-\tt x+z|<2(y-\tt y)\}} g\left(\ff{|x-\tt
x+z|}{ y-\tt y }\right) ( |x-\tt x+z|-(y-\tt
y)) \, \nu_\aa(\d z)\bigg\} \\
&=:-  b(y-\tt y)+(y-\tt y)(J_1+J_2).
\end{align*}

It is easy to  get that
$$
J_1
 \le \int_{\{|x+z-\tt x|>2(y-\tt y)\}} ( |x -\tt
x| +z )
 \nu_\aa(\d z) \le   2\int_{\{z > y-\tt y \}}  z
 \nu_\aa(\d z) =c_1(y-\tt y)^{1-\aa}
$$
for some  $c_1>0,$ where in the second inequality we used
$y-\tt y>|x-\tt x|$ and
$$z>|x-\tt x+z|-|x-\tt x|>2(y-\tt y)-|x-\tt x|\ge y-\tt y.
$$

According to the definition of the function $g(\cdot)$ given by \eqref{e:test}, without loss of generality, we can assume that there is a constant $c_*>0$ such that  $g(r)\le c_* (r-1)^{2+\dd}$ for $r\ge1$. Then, it holds
\begin{align*}
J_2&\le \ff{c_*}{(y-\tt y)^{2+\dd}}\int_{\{y-\tt y<|x-\tt x+z|<2(y-\tt
y)\}} ( |x-\tt x+z|-(y-\tt y)  )^{3+\dd}\,\nu_\aa(\d z)\\
&\le\ff{c_*}{(y-\tt y)^{2+\dd}}\int_{\{y-\tt y<|x-\tt x+z|<2(y-\tt y)\}}
(|x-\tt x|+z-(y-\tt y)
)^{3+\dd}\,\nu_\aa(\d z)\\
&\le \ff{c_* }{(y-\tt y)^{2+\dd}}\int_{\{z<3(y-\tt y)\}}
z^{3+\dd} \, \nu_\aa(\d z)
=c_2(y-\tt y)^{1-\aa}
\end{align*}
for some constant $c_2>0,$ where
the last inequality follows from
the facts that $|x-\tt x|<y-\tt y $ and
 $z <|x-\tt x+z|+|x-\tt x|\le
 3(y-\tt y).$ Henceforth, combining the   estimates above and taking \eqref{YYY} into account
 give \eqref{PP} for the case {\bf (ii)}.

\smallskip

\noindent{\bf Proof of \eqref{PP} for the case (iii)}. With
regard to this case, we derive from \eqref{E6} and \eqref{E5} that there exists a   constant
$c_1>0$ so that
\begin{equation}\label{E0}
\begin{split}
(L^*F)
(y-\tt y,|x-\tt x|)&\le
c_1  (y-\tt y)+(y -\tt y )( \LL_1+\LL_2)\\
&\quad+
 \ff{c_0}{2}(\ss
y+\ss{\tt y})
^2  (y-\tt y)^{-1}\I_{\{0< y-\tt y<1\}}\\
&\quad+\ff{c_0}{2}(\ss
y-\ss{\tt y})
^2 (y-\tt y)^{-1}\I_{\{  y-\tt y\ge1\}},
\end{split}
\end{equation}
where
\begin{align*}\LL_1:=\int_{\{z\le
({y-\tt y})/{2}\}}\bigg(F
(y -\tt y ,|x+z-\tt x|)&-F
(y-\tt y,|x-\tt x|)\\
&  -\partial_2F
(y-\tt y,|x-\tt x|)\ff{(x-\tt x)z}{|x-\tt x|}
\bigg)\,\nu_\aa(\d z)\end{align*} and
\begin{align*}
\LL_2:= \int_{\{z\ge ({y-\tt y})/{2}\}}\bigg(F
(y -\tt y
,|x+z-\tt
x|)&-F
(y-\tt y,|x-\tt x|)\\
& -\partial_2F
(y-\tt y,|x-\tt x|)\ff{(x-\tt x)z}{|x-\tt x|}
\bigg)\,\nu_\aa(\d z).
\end{align*}

Note that  $x+z-\tt x<0$ in case of $x<\tt x$,     $z\le (y-\tt y)/2$ and $y-\tt y\le |x-\tt x|\le 2(y-\tt
y)$. Then, we have
\begin{align*}\LL_1&=\int_{z\le
({y-\tt y})/{2}}\bigg(F
(y -\tt y ,x-\tt x+z)-F
(y-\tt y,x-\tt x)
  -\partial_2F
  (y-\tt y,x-\tt x)z \bigg)\\
&\qquad\qquad\qquad\qquad\qquad\qquad\qquad\qquad\qquad\qquad\qquad\qquad\qquad\times\I_{\{x\ge\tt x\}}\,\nu_\aa(\d z)\\
&\quad+ \int_{\{z\le ({y-\tt y})/{2},x+z-\tt x<0\}} (F
(y
-\tt y ,
\tt x-x-z )-F
(y-\tt y,\tt x-  x )\\
&\qquad\qquad\qquad\qquad\qquad\qquad\qquad\qquad\qquad\quad\,\,\,\,\,
  +\partial_2F
  (y-\tt y,\tt x-  x )z \bigg)\I_{\{x<\tt x\}}\,\nu_\aa(\d z).
\end{align*}
Applying the mean value theorem and taking  \eqref{E6}  into account,
we find that there is a constant $c_2>0$ so that
\begin{align*}
\LL_1 &=\ff{1}{2}\int_{\{z\le ({y-\tt y})/{2}\}}
\partial_{22}F
(y-\tt
y,\xi_1)z^2 \I_{\{x\ge\tt x\}}\nu_\aa(\d z)\\
&\quad+\ff{1}{2}\int_{\{z\le ({y-\tt y})/{2},x+z-\tt x<0\}}
\partial_{22}F
(y-\tt
y,\xi_2 )z^2  \I_{\{x<\tt x\}}\nu_\aa(\d z) \\
&\le c_2(y-\tt y)^{1-\aa},
\end{align*}
where $\xi_1\in[y-\tt y,5(y-\tt y)/2]$ and $\xi_2\in[(y-\tt y)/2,
2(y-\tt y)].$  On the other  hand, it follows from \eqref{E6} that
there exist constants $c_3,c_4,c_5>0$ such that
\begin{align*}
\LL_2&\le c_3\int_{\{z\ge ({y-\tt y})/{2}\}}  \left(y-\tt
y + |x-\tt
x+z| + z\right)\nu_\aa(\d z)\\
&\le c_4\int_{\{z\ge ({y-\tt y})/{2}\}}  \left( y-\tt y
 +z \right)\nu_\aa(\d z)
 \le c_5(y-\tt y)^{1-\aa},
\end{align*}
where  in the second inequality we used the fact that $y-\tt y\le |x-\tt
x|\le 2(y-\tt y)$. Therefore, \eqref{PP} for the case {\bf (iii)} follows by substituting the two estimates
above into \eqref{E0} and taking advantage of \eqref{YYY}.
\end{proof}

\begin{remark}\label{E:ggg}  The main task of the proof
above is to control an upper bound for $(L^*F)
(y-\tt y,|x-\tt x|)$ for all $y>\tt y\ge 0$ and $x,\tt x\in \R$. In cases {\bf (i)} and {\bf (ii)}, we can get that
$$(L^*F)
(y-\tt y,|x-\tt x|)\le  -\lambda |x-\tt
x| \I_{\{|x-\tt x|\ge 2(y-\tt y) \}} +C_0(y-\tt y)^{2-\alpha}.$$ To get \eqref{R4} for  those two cases, one can take
$$V_{c,c_*}(s, t)=c\Big(s+\int_0^s   \e^{{-c_*u^{1-\alpha}}} \d u\Big) + F (s, t),~~~s,t\ge0$$ instead of $V_{c,\theta}(s, t)$ defined by \eqref{v:function} (with possibly choices of $c, c_*>0$), and apply the coupling by reflection for the
first component $(Y_t)_{t\ge0}$ before two marginal processes meet (that is, $$ -\sqrt{y
\tt y}\partial_{12}f(y,\tt y, x,\tt x)\I_{\{0<y-\tt y<1\}}+\sqrt{y
\tt y}\partial_{12}f(y,\tt y, x,\tt x)\I_{\{  y-\tt y\ge1\}\cup \{y=\tt y\}}$$ is replaced by $$-\sqrt{y
\tt y}\partial_{12}f(y,\tt y, x,\tt x)\I_{\{y\neq \tt y\}}+ \sqrt{y
\tt y}\partial_{12}f(y,\tt y, x,\tt x)\I_{\{y=\tt y\}}$$ in the coupling operator $(L^*f)(y,\tt y, x,\tt x)$ given by \eqref{L2}). Note that, the nice property of the function $V_{c,c_*}(s, t)$ above is that
$$V_{c,c_*}(s, t)\asymp s+t,$$ which is comparable to the \lq\lq cost function\rq\rq\,\, in the standard
$L^1$-Wasserstein distance. However, for the case {\bf(iii)}, to eliminate
the last two positive terms in the right hand side of \eqref{E0}, we need not only to apply the test function $V_{c,\theta}(s, t)$ defined by \eqref{v:function}, but also to modify the coupling of  the Brownian motion $(B_t)_{t\ge0}$ in the
first component $(Y_t)_{t\ge0}$.
 \end{remark}

Now, we are in a position to present the
\begin{proof}[Proof of Theorem $\ref{T:main}$]
Note that, for all $\theta\in (0,1)$, $(y,\tt y)\mapsto |y-\tt y|+|y-\tt y|^\theta$ is a metric on $\R_+$, and so $(\mu_1,\mu_2)\mapsto W_{\psi_\theta}(\mu_1,\mu_2)$ is a metric on the space of probability measures on $\R_+\times \R$,
where $$\psi_\theta(u,v):=u+u^{\theta} +v,\quad u,v\ge0.$$ Note that $W_1(\mu_1,\mu_2)\le  W_{\psi_\theta}(\mu_1,\mu_2)$ for any $\theta\in (0,1)$.  To prove Theorem \ref{T:main}, we actually verify
the exponential ergodicity of the process $(Y_t,X_t)_{t\ge0}$ in terms of $W_{\psi_\theta}$.
Furthermore, as mentioned in Subsection \ref{L:order}, the coupling of the first component process preserves the order
property, i.e.,\ $Y_t\ge \tt Y_t$ for all $ t\ge0$ if $Y_0=y>\tt Y_0=\tt y$.
By carrying out more or less
standard arguments (see e.g. the proof
of \cite[Corollary 1.8]{Maj}), to achieve the desired
assertion \eqref{YY} on  the exponential ergodicity of the process $(Y_t,X_t)_{t\ge0}$ in terms of $W_{\psi_\theta}$, it is sufficient to show that
for all $y\ge \tt y\ge0$, $x,\tt x\in \R$ and $t>0$,
\begin{equation}\label{e:rrr}
W_{\psi_\theta} (P(t,(y,x),\cdot), P(t,(\tt y,\tt x),\cdot))\le C \e^{-\eta  t}\psi_\theta(y-\tt y,|x-\tt x|)
\end{equation}
holds with
some constants $C,\eta>0$
(which are independent of $y,\tt y,x,\tt x$ and $t$).
This obviously holds true provided that
\begin{equation}\label{Y1}
\Ee^{(y,x)} \psi_\theta(Y_t,|X_t|)<\infty
\end{equation}
and that
\begin{equation}\label{Y2}
\begin{split}
 \Ee^{((y,x),(\tt y, \tt x))} \psi_\theta(Y_t-\tt Y_t,|X_t-\tt X_t|) \le C\e^{-\eta  t} \psi_\theta(y-\tt y,|x-\tt x|).
\end{split}
\end{equation}

Since $\psi_\theta(u,v)=u+u^\theta +v\le 2(1+u+v)$, \eqref{Y1} is true due to \eqref{e:mom}. Let $V_{c,\theta}(s,t)$ be the function and $\zeta$  the positive constant in Proposition \ref{P:P-1}.
 By using \eqref{e:testaa}, \eqref{Y2}  is valid once we claim that
\begin{equation}\label{Y3}
\Ee^{((y,x), (\tt y,\tt x))} V_{c,\theta}(Y_{t}-\tt Y_{t},|X_{t}-\tt X_{t}|) \le \e^{-(\lambda\wedge \zeta) t}V_{c,\theta}(y-\tt y,|x-\tt x|)
\end{equation}
for all $y\ge \tt y\ge0$, $x,\tt x\in \R$ and $t>0$, where $\lambda>0$ is the constant in the SDE \eqref{WW1}. So, in what follows, it remains to show that the assertion \eqref{Y3} holds.

Define $$T_Y=\inf\{t>0: Y_t=\tt Y_t\},~~
 T_{Y,n}=\inf\{t>0: Y_t-\tt Y_t\le 1/n\},~~~n\ge1.$$
Let $y\ge \tt y\ge 0$. Note that for all $r\ge 0$,
\begin{equation}\label{Y4}V_{c,\theta}(0,r)= r=r\partial_2 V_{c,\theta}(0,r).\end{equation}
If $y=\tt y\ge0$, then $T_Y=0$ so that
for any $t\ge s\ge 0$,
\begin{equation}\label{Y6}
\begin{split}
&\e^{(\lambda\wedge \zeta) t}V_{c,\theta}(Y_{t}-\tt Y_{t},|X_{t}-\tt X_{t}|)\\
&=\e^{(\lambda\wedge \zeta) t}V_{c,\theta}(0,|X_{t}-\tt X_{t}|)\\
&=\e^{(\lambda\wedge \zeta) s}V_{c,\theta}(0,|X_s-\tt X_s|)\\
&\quad+ \int_s^t\!\!\e^{(\lambda\wedge \zeta) r}\{(\lambda\wedge \zeta)V_{c,\theta}(0,|X_r\!-\!\tt X_r|)\!-\! \lambda|X_r\!-\!\tt X_r|\partial_2V_{c,\theta}(0,|X_r\!-\!\tt X_r|)\}\,\d r\\
&\le \e^{(\lambda\wedge \zeta) s}V_{c,\theta}(0,|X_s-\tt X_s|)),
\end{split}
\end{equation}
where the second identity holds true from the structure of the second equation in \eqref{WW1},
i.e., for any fixed $s\ge0$ with $Y_s=\tt Y_s$, we have $Y_r=\tt Y_r$ for all $r\ge s$ and so
\begin{equation}\label{Y6---}\d (X_r-\tt X_r)= - \lambda  (X_r-\tt X_r) \,\d r,\quad r\ge s,\end{equation}
 and
the inequality is owing to \eqref{Y4}. With \eqref{Y6} for $s=0$ at hand, it is easy to see  that \eqref{Y3} holds for $y=\tt y\ge0.$

In the following, we only need  to verify  \eqref{Y3} for   $y>\tt y\ge0$.
 Choose $n_0\ge1$ sufficiently large such that $y-\tt y> 1/n_0$. Noting
 again
 that $Y_t\ge \tt Y_t$ for all $ t\ge0$ (in particular, for all $0\le t\le T_{Y,n}$ and $n\ge 1$) whenever $Y_0=y>\tt Y_0=\tt y$. Then,
for any $t>0$ and $n\ge n_0$, by It\^o's formula, it follows that
\begin{align*}&\Ee^{((y,x), (\tt y,\tt x))}(\e^{\zeta (t\wedge T_{Y,n})} V_{c,\theta}(Y_{t\wedge T_{Y,n}}-\tt Y_{t\wedge T_{Y,n}},|X_{t\wedge T_{Y,n}}-\tt X_{t\wedge T_{Y,n}}| ))\\
&=V_{c,\theta}(y-\tt y, |x-\tt x|)\\
&\quad +\Ee^{((y,x), (\tt y,\tt x))}\int_0^{t\wedge T_{Y,n}}\e^{\zeta s}\left( \zeta  V_{c,\theta}(Y_{s}-\tt Y_{s},|X_{s}-\tt X_{s}| )- L^*V_{c,\theta}(Y_{s}-\tt Y_{s},|X_{s}-\tt X_{s}| )\right) \d s\\
&\le V_{c,\theta}(y-\tt y, |x-\tt x|), \end{align*} where we utilized \eqref{R4} in the last inequality.
Approaching $n\to \infty$ yields  that all $y>\tt y\ge0$, $x,\tt x\in \R$ and $t>0$,
$$ \Ee^{((y,x), (\tt y,\tt x))}(\e^{\zeta (t\wedge T_Y)}V_{c,\theta}(Y_{t\wedge T_Y}-\tt Y_{t\wedge T_Y},|X_{t\wedge T_Y}-\tt X_{t\wedge T_Y}| ))\le V_{c,\theta}(y-\tt y,|x-\tt x|).$$ Consequently,
  \eqref{Y3} is available  for   $y>\tt y\ge0$ by
taking advantage of
\begin{align*}
\e^{(\lambda\wedge \zeta) t}V_{c,\theta}(Y_{t}-\tt Y_{t},|X_{t}-\tt X_{t}|)
&=\e^{(\lambda\wedge \zeta) t}V_{c,\theta}(Y_{t}-\tt Y_{t},|X_{t}-\tt X_{t}|)\I_{\{T_Y>t\}}\\
&\quad+ \e^{(\lambda\wedge \zeta) t}V_{c,\theta}(Y_{t}-\tt Y_{t},|X_{t}-\tt X_{t}|)\I_{\{T_Y\le t\}}\\
&=\e^{(\lambda\wedge \zeta) (t\wedge T_Y)}V_{c,\theta}(Y_{t\wedge T_Y}-\tt Y_{t\wedge T_Y},|X_{t\wedge T_Y}-\tt X_{t\wedge T_Y}|) \I_{\{T_Y>t\}}\\
&\quad+\e^{(\lambda\wedge \zeta) t}V_{c,\theta}(0,|X_t-\tt X_t|)\I_{\{T_Y\le t\}}\\
&\le \e^{(\lambda\wedge \zeta) (t\wedge T_Y)}V_{c,\theta}(Y_{t\wedge T_Y}-\tt Y_{t\wedge T_Y},|X_{t\wedge T_Y}-\tt X_{t\wedge T_Y}|) \I_{\{T_Y>t\}}\\
&\quad+\e^{(\lambda\wedge \zeta) T_Y}V_{c,\theta}(0,|X_{T_Y}-\tt X_{T_Y}|)\I_{\{T_Y\le t\}}\\
&\le \e^{(\lambda\wedge \zeta) (t\wedge T_Y)}V_{c,\theta}(Y_{t\wedge T_Y}-\tt Y_{t\wedge T_Y},|X_{t\wedge T_Y}-\tt X_{t\wedge T_Y}|) \I_{\{T_Y>t\}}\\
&\quad+\e^{(\lambda\wedge \zeta)  (t\wedge T_Y) }V_{c,\theta}(Y_{t\wedge T_Y}-\tt Y_{t\wedge T_Y},|X_{t\wedge T_Y}-\tt X_{t\wedge T_Y}|)\I_{\{T_Y\le t\}}\\
&=\e^{(\lambda\wedge \zeta) (t\wedge T_Y)}V_{c,\theta}(Y_{t\wedge T_Y}-\tt Y_{t\wedge T_Y},|X_{t\wedge T_Y}-\tt X_{t\wedge T_Y}|),
\end{align*}
where in the
first  inequality above we used \eqref{Y6} with $s=T_Y$.
\end{proof}

\begin{remark}\label{R-P} Here we make some comments on the proof of Theorem \ref{T:main}.
\begin{itemize}
\item[(i)]
Recall that $T_Y=\inf\{t>0: Y_t=\tt Y_t\}.$
Define
$$T_X=\inf\{t\ge0: X_t=\tt X_t\},~~~T=\inf\{t>0: Y_t=\tt Y_t, X_t=\tt X_t\}.$$ It is clear that $T\ge T_Y\vee T_X$ and $Y_t=\tt Y_t$ for $t\ge T_Y$. However, by the structure of two-factor process defined by \eqref{WW1}, it can take place that $T>T_Y\vee T_X$, since
 $X_t=\tt X_t$ is not true for all $t\ge T_X$ unless $t\ge T_Y$.
The idea for the proof of Theorem \ref{T:main} is to make full use of the coupling time $T_Y$ for the first component, rather than $T$. To consider the coupling process until the coupling time $T_Y$, we need the crucial estimate \eqref{R4}. Since before time $T_Y$, it may occur that $X_t=\tt X_t$ for some $t\le T_Y$. Hence, if we will apply the It\^{o} formula for the test function $V_{c,\theta}(y-\tt y, |x-\tt x|)$ as given in Proposition \ref{P:P-1}, then this function is required to be differentiable on $\{(x,x):x\in \R\}$ for any fixed $y,\tt y$.
Furthermore, from $T_Y$ to $T$ the coupling of the first component always stays together, so we only need to couple the second component. For this, we make use  of  \eqref{Y6---}, which is due to the special characterization of the two-factor process.

\item[(ii)] As mentioned above, Proposition \ref{P:P-1} is crucial for the proof of Theorem \ref{T:main}. Instead of \eqref{E5}, one natural way to obtain the exponential ergodicity in the $L^1$-Wasserstein distance is to prove that there exists a constant $c_0>0$ such that for all $y>\tt y\ge0$ and $x,\tt x\in \R$,
    \begin{equation}\label{e:remark}(L^*G)(y-\tt y, |x-\tt x|)\le -c_0 G(y-\tt y, |x-\tt x|),\end{equation} where    \begin{equation}\label{e:remark1}G(s,t)\asymp s+t,\quad s,t\ge0;\end{equation} see \cite{LW, 
    Lwcw, Maj} for example. Since in the setting of Theorem \ref{T:main} the drift term satisfies the so-called monotone condition due to $b>0$ and $\lambda>0$, one may just take
    $$G(s,t)=G_0(s,t):=s+t,\quad s,t\ge0.$$ By some
     calculations, we find that there is a constant $c_1> 1$ so that for all $y>\tt y\ge0$ and $\tt x\ge x$,
   \begin{align*}-b (y-\tt y)-\lambda(\tt x-x)+&c_1^{-1}(y-\tt y)(\tt x-x)^{1-\alpha}\\
   &\le (L^* G_0)(y-\tt y, \tt x-x)\\
   &\le -b (y-\tt y)-\lambda(\tt x-x)+c_1(y-\tt y)(\tt x-x)^{1-\alpha}.\end{align*} That
   is, with this choice, \eqref{e:remark} can not be true. Hence,  some modification of $G_0(s,t)$ is required. The function $F(s,t)$ defined by \eqref{EE3} and satisfying \eqref{e:remark1} is one possible candidate.  Whereas, the function $F(s,t)$ above is not enough, we need further to refine it into $V_{c,\theta}(s,t)$.
   In particular, the factor $s^\theta$ is added to balance some bad estimates from $(L^*F)(y-\tt y, |x-\tt x|).$ See Remark \ref{E:ggg} above for some details.   \end{itemize} \end{remark}

\section{Exponential ergodicity for other two-factor affine processes and beyond}\label{Section4}

\subsection{The case \eqref{WW1} with two spectrally positive stable noises}
In this subsection, we are still interested in  the two-factor model
\eqref{WW1} but with  the Brownian motion $(B_t)_{t\ge0}$ in the first equation replaced by  a
  spectrally positive $\bb$-stable process $(L_t)_{t\ge0}$ for some $\bb\in (1,2)$. More precisely,    we shall work on  the SDE on $\R_+\times \R$:
\begin{equation}\label{WW2}
\begin{cases}
\d Y_t=(a-b Y_t)\,\d t+ Y_{t-}^{1/\bb} \,\d L_t,&\quad t\ge0,\,Y_0\ge0,\\
\d X_t=(\kappa-\lambda X_t)\d t+Y_{t-}^{1/\alpha}\,\d Z_t,&\quad t\ge0,\,X_0\in \R,
\end{cases}
\end{equation}
where $a\ge0$, $b,\kappa,\lambda\in\R$,
$(L_t)_{t\ge0}$ (resp.\ $(Z_t)_{t\ge0}$) is a
  spectrally  positive  $\bb$-stable  (resp.\ $\aa$-stable)  process with the L\'{e}vy measure
$\nu_\bb(\d z)$ (resp.\ $\nu_\aa(\d z)$). We further assume that $(L_t)_{t\ge0}$ and
$(Z_t)_{t\ge0}$ are mutually independent.
Again, by means of \cite[Theorem 2.1]{BDLP},  \eqref{WW2} has a unique strong solution $(Y_t, X_t)_{t\ge0}$.
Then, we have the following statement for the affine process associated with the SDE \eqref{WW2}.

\begin{theorem}\label{T:main1} Let $(Y_t, X_t)_{t\ge0}$ be the unique strong solution to \eqref{WW2}. If $b>0$ and $\lambda>0$, then the process $(Y_t,X_t)_{t\ge0}$ is exponentially ergodic with respect to the $L^1$-Wasserstein distance $W_1$.
\end{theorem}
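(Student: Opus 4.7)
The plan is to follow the template of the proof of Theorem~\ref{T:main}: build a Markov coupling of two copies of $(Y_t,X_t)$, derive a generator inequality of the form $L^*V_{c,\theta}\le -\zeta V_{c,\theta}$ for a Lyapunov function of the same shape $V_{c,\theta}(s,t)=c(s+s^\theta)+F(s,t)$ as in \eqref{v:function}, and then run It\^o's formula up to the meeting time $T_Y$ of the first components, using \eqref{Y6---} for $t\ge T_Y$. Since the $X$-equation is structurally unchanged, I would keep the synchronous coupling of the second component, and the post-$T_Y$ argument (the analogues of \eqref{e:mom}, \eqref{Y4}, \eqref{Y6}) goes through verbatim.

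The essential modification is the coupling of the first component, because the reflection coupling used in \eqref{Cou1} relies on the Brownian structure of the driver. For the spectrally positive $\bb$-stable noise $L_t$ I would use a hybrid coupling in the spirit of \cite{Lwcw,Maj,LW}: synchronous in $L$ whenever $Y_t-\tt Y_t\ge 1$ (which preserves the order $Y_t\ge\tt Y_t$ by pathwise comparison, and yields $\e^{-bt}$-contraction in expectation through the mean-reverting drift), combined with a refined basic jump coupling on small distances which, at suitably selected jumps of the driving Poisson random measure, places $\tt Y$ on top of $Y$ and merges the two first-component trajectories. Once merged, $Y_t=\tt Y_t$ for all $t\ge T_Y$, so as in \eqref{Y6---} the second component satisfies $\d(X_t-\tt X_t)=-\lambda(X_t-\tt X_t)\,\d t$, and the post-$T_Y$ analysis reduces to that of Theorem~\ref{T:main}. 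The coupling operator \eqref{L2} is modified correspondingly: the Brownian cross term $\sqrt{y\tt y}\,\partial_{12}f$ is replaced by a compensated nonlocal operator in the $(y,\tt y)$-variables encoding the chosen jump coupling, while the $X$-part is unchanged.

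I would then redo the analogue of Proposition~\ref{P:P-1}. The $X$-contributions in \eqref{E5} are identical; the $Y$-contributions from the Brownian diffusion term $\frac{y}{2}\partial_{11}$ are replaced by a nonlocal $\bb$-stable operator which, applied to $c\,s^\theta$, produces a negative quantity of order $s^{\theta-\bb}$ on $\{0<s<1\}$ (the $\bb$-stable analogue of the $s^{\theta-1}$ term providing the key negativity in the original argument) and which, applied to $F$, produces cross terms of order $s^{2-\bb}$ near the diagonal and $s^{1-\bb}$ at larger scales. With $\theta\in(0,1)$ chosen small enough relative to $\bb$, these cross terms are absorbed into the $s^\theta$ penalty exactly in the manner of \eqref{YYY}, and the three-regime analysis {\bf (i)}--{\bf (iii)} carries over.

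The main technical obstacle I anticipate is precisely case {\bf (iii)} ($y-\tt y\le|x-\tt x|\le 2(y-\tt y)$), where the $\bb$-stable $Y$-jumps and the $\aa$-stable $X$-jumps simultaneously hit the cross region on which $F$ is neither $s$ nor $t$; controlling this will require splitting the jump size into small and large pieces and using the mean-value estimates \eqref{E6} on $F$ for two independent stable kernels at once, which is the genuinely new computation compared with Proposition~\ref{P:P-1}. A secondary difficulty is constructing the jump coupling of $L$ so that $T_Y<\infty$ almost surely and the Markovian structure is preserved; this should follow from the blow-up of $\nu_\bb$ near the origin combined with an auxiliary Foster-Lyapunov argument for $Y_t-\tt Y_t$ on $[0,T_Y)$, analogous to the constructions used in \cite{Maj,LW}.
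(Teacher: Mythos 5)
Your proposal takes a genuinely different and unnecessarily complicated route from the paper. The paper's proof of Theorem~\ref{T:main1} uses a \emph{pure synchronous coupling} for \emph{both} components of the two-factor process solved by \eqref{WW2}: no reflection, no refined jump coupling, no attempt to force the first-component trajectories to merge. The key observation you miss is encapsulated in the estimate \eqref{T2}: under synchronous coupling the nonlocal $\bb$-stable operator acting on the concave penalty $s\mapsto c(s+s^\theta)$ is \emph{non-positive}, since $(s+z)^\theta-s^\theta-\theta s^{\theta-1}z\le 0$ for $\theta\in(0,1)$ and $z>0$. Hence the entire contraction in Proposition~\ref{P:es2} is produced by the mean-reverting drift $-b(y-\tt y)$ alone; the noise contributes nothing bad to the Lyapunov part. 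The crude cross terms of order $s^{2-\aa}$ and $s^{2-\bb}$ that arise from $L^*F$ are then absorbed by choosing $\theta\in(0,\,2-(\aa\vee\bb)]$ --- this precise constraint on $\theta$ is the genuine quantitative ingredient, which your proposal only gestures at. Because the coupling is synchronous, the meeting time $T_Y$ is a.s.\ infinite for $y>\tt y$, so the stopping-time and post-$T_Y$ machinery of the proof of Theorem~\ref{T:main} degenerates harmlessly (the set $\{T_Y\le t\}$ is null) and the Lyapunov decay is obtained directly on $[0,t]$ for every $t$.

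Your hybrid coupling (synchronous at large distances, refined jump coupling on small distances to force $T_Y<\infty$) is in principle feasible along the lines of \cite{Lwcw,Maj,LW}, but it is not what the paper does, and it imports three real obstacles that the paper sidesteps entirely: (a) verifying that the refined jump coupling for the asymmetric, spectrally positive driver preserves the order $Y_t\ge\tt Y_t$; (b) proving $T_Y<\infty$ a.s.\ under that coupling; and (c) rewriting the coupling operator \eqref{L2} with a compensated nonlocal $(y,\tt y)$-part and redoing the three-regime analysis with two coupled jump kernels, rather than only the one ($\nu_\aa$) in the $X$-direction as in the paper's \eqref{WW3}. You correctly anticipate that case \textbf{(iii)} is the delicate regime and that one must split the $\bb$-jumps into small and large pieces and invoke the mean-value bounds of Lemma~\ref{L:lem1}; that part of the analysis does appear in the paper (see the treatment of $\Upsilon_3=\Upsilon_{31}+\Upsilon_{32}$ there). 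But the framing of ``the $\bb$-stable operator applied to $cs^\theta$ produces a negativity of order $s^{\theta-\bb}$ that replaces the reflection term'' misidentifies what drives the proof: you neither need nor use that negativity, only its non-positivity, and the decay rate comes from the drift, not the noise coupling.
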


For the SDE \eqref{WW2}, we shall apply the synchronous coupling for both
components $(Y_t)_{t\ge0}$ and $(X_t)_{t\ge0}$. For this, we consider the SDE on $\R_+\times \R$:
$$
\begin{cases}
\d \tt Y_t=(a-b \tt Y_t)\,\d t+ \tt Y_{t-}^{1/\bb} \,\d L_t,&\quad t\ge0,\,\tt Y_0>0,\\
\d \tt X_t=(\kappa-\lambda \tt X_t)\d t+\tt Y_{t-}^{1/\alpha}\,\d
Z_t,&\quad t\ge0,\,X_0\in \R.
\end{cases}
$$
It is clear that $((Y_t,X_t),(\tt Y_t,\tt X_t))_{t\ge 0}$ is a coupling of the process $(Y_t,X_t)_{t\ge0}$.
According to \cite[Corollary 2.3]{LW}, the coupling of the first component process also preserves the order property; that is, $Y_t\ge \tt Y_t
$ for all $t\ge0,$ in case of  $Y_0\ge \tt Y_0.$

Furthermore, the infinitesimal generator (i.e.\ the coupling operator) of the coupling process $((Y_t,X_t),(\tt Y_t,\tt X_t))_{t\ge 0}$ is given by
\begin{equation}\label{WW3}
\begin{split}
&(L^*G)(y-\tt y,|x-\tt x|)\\
&=-b\partial_1G(y-\tt y,|x-\tt x|)(y-\tt
y) -\lambda \partial_2G(y-\tt y,|x-\tt x|)|x-\tt x|\\
&\quad +(y -\tt y )\int_0^\infty\Big(G(y -\tt y+z,|x-\tt x|)-G(y-\tt
y,|x-\tt x|) \\
&\qquad\qquad\qquad\qquad\qquad\qquad\qquad\qquad -\partial_1G(y-\tt y,|x-\tt x|)z\Big)\,\nu_\bb(\d z)\\
&\quad+(y -\tt y )\int_0^\infty\Big(G(y -\tt y,|x+z-\tt x|)-G(y-\tt
y,|x-\tt x|)\\
&\qquad\qquad\qquad\qquad\qquad\qquad\qquad\qquad -\partial_2 G
(y-\tt y,|x-\tt x|)\ff{(x-\tt x)z}{|x-\tt x|}\Big)\,\nu_\aa(\d z)
\end{split}
\end{equation}for   $G\in C^2(\R_+^2)$ and $(y,\tt y, x, \tt x)\in \R_+^2\times \R^2$ with $0\le \tt y\le y$. Similarly, we can write the expression of $(L^*G)(y-\tt y,|x-\tt x|)$ for   $G\in C^2(\R_+^2)$ and $(y,\tt y, x, \tt x)\in \R_+^2\times \R^2$ with $0\le y\le \tt y$.

For  any $c>0$ and $\theta\in(0,1)$, define
$$V_{c,\theta}(s,t) =c\,(s+ s^\theta)+F(s,t),$$
where $F$ was introduced in \eqref{EE3}. It is obvious that
for any $c>0$,
$$V_{c,\theta}(s,t)\asymp (s+s^\theta  )+t.$$

The proof of Theorem \ref{T:main1} is based on the
 following proposition.

\begin{proposition}\label{P:es2} For any $\theta\in(0,2-(\aa\vee \bb)]$, there exist  constants $c,\eta>0$ such that
\begin{equation}\label{T1}
(L^*V_{c,\theta})(y-\tt y,|x-\tt x|)\le -\eta V_{c,\theta} (y-\tt y,|x-\tt x|),~~\tt y\in[0,y),x,\tt x\in\R.
\end{equation}  \end{proposition}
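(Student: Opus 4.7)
The plan is to imitate the structure of the proof of Proposition \ref{P:P-1}, splitting $V_{c,\theta}(s,t) = U_{c,\theta}(s,t) + F(s,t)$ with $U_{c,\theta}(s,t) = c(s + s^\theta)$, and handling each summand separately under the coupling operator \eqref{WW3}. Because both marginals are now driven by purely discontinuous noises and only the synchronous coupling is used, the favourable Brownian reflection term $(\sqrt{y} + \sqrt{\tt y})^2(y-\tt y)^{-1}\I_{\{0 < y - \tt y < 1\}}$ available in Proposition \ref{P:P-1} is gone, and the small-$(y-\tt y)$ compensation must instead come from the interplay between the $s^\theta$ factor in $V_{c,\theta}$ and the drift and $\bb$-stable jump of the first component.

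For $U_{c,\theta}$, the drift in \eqref{WW3} contributes $-cb(y-\tt y) - cb\theta(y-\tt y)^\theta$, while by concavity of $r \mapsto r^\theta$ the first-component $\bb$-stable jump integral is non-positive; a scaling $z = (y-\tt y)u$ identifies it with $-cK_{\theta,\bb}(y-\tt y)^{\theta-\bb+1}$ for some $K_{\theta,\bb} > 0$, the one-dimensional integral $\int_0^\infty [(1+u)^\theta - 1 - \theta u]\, u^{-1-\bb}\,\d u$ being finite for all $\theta \in (0,1)$ and $\bb \in (1,2)$. For $F$ I would reuse the three-case split of Proposition \ref{P:P-1}: case \textbf{(i)} $|x-\tt x| \ge 2(y-\tt y)$, case \textbf{(ii)} $|x-\tt x| \le y - \tt y$, and case \textbf{(iii)} $y-\tt y \le |x-\tt x| \le 2(y-\tt y)$. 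In case (i) we have $F = |x-\tt x|$ and $\partial_1 F = 0$, so the drift produces $-\lambda|x-\tt x|$; the $\nu_\bb$-integral is handled by a Taylor expansion using the second-derivative bound $|\partial_{11}F| \le c_0(y-\tt y)^{-1}$ from Lemma \ref{L:lem1} near $z = 0$ and by the linear bound $F(y-\tt y+z,|x-\tt x|) - |x-\tt x| \le y-\tt y + z$ for large $z$, and the $\nu_\aa$-integral is controlled exactly as in \eqref{W1}-\eqref{W3}. Case (ii) is analogous with $F = y-\tt y$ and $\partial_2 F = 0$, producing $-b(y-\tt y)$ from the drift.

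The main obstacle lies in case (iii), where neither $\partial_1 F$ nor $\partial_2 F$ vanishes. As in Proposition \ref{P:P-1} I would split each jump integral at $z = (y-\tt y)/2$, use Taylor expansion with $|\partial_{ii}F| \le c_0(y-\tt y)^{-1}$ on the small-$z$ side and linear bounds on the large-$z$ side. After collecting the estimates, both stable jumps in \eqref{WW3} leave a positive remainder of order $(y-\tt y)^{2-\aa} + (y-\tt y)^{2-\bb}$ on $\{0 < y-\tt y < 1\}$ and a linear one $\le C(y-\tt y)$ on $\{y-\tt y \ge 1\}$, with no diffusion cancellation available to absorb them. The hypothesis $\theta \le 2 - (\aa \vee \bb)$ is precisely what yields $(y-\tt y)^{2-\aa} + (y-\tt y)^{2-\bb} \le 2(y-\tt y)^\theta$ on $\{0 < y-\tt y < 1\}$, so that choosing $c$ sufficiently large forces $-cb\theta(y-\tt y)^\theta$, together with the bonus $-cK_{\theta,\bb}(y-\tt y)^{\theta-\bb+1}$ coming from the stable jump on $s^\theta$, to dominate. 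On $\{y-\tt y \ge 1\}$ the linear contribution $-cb(y-\tt y)$ plays the same role. Finally, selecting $\eta > 0$ small compared with $b$, $\lambda$ and $b\theta$ and invoking \eqref{e:testaa} yields \eqref{T1}.
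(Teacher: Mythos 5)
Your proposal matches the paper's proof of Proposition~\ref{P:es2} in all essentials: the decomposition $V_{c,\theta}=U_{c,\theta}+F$, the concavity argument showing the $\nu_\bb$-contribution to $U_{c,\theta}$ is nonpositive, the three-case analysis for $F$ that reuses the $\nu_\aa$-estimates from Proposition~\ref{P:P-1}, and the observation that $\theta\le 2-(\aa\vee\bb)$ is exactly what lets $(y-\tt y)^{2-\aa}+(y-\tt y)^{2-\bb}$ be absorbed by $(y-\tt y)+(y-\tt y)^\theta$. The only cosmetic differences are that in case (i) the paper bounds the $\nu_\bb$-integral by noting its integrand vanishes unless $z\ge y-\tt y$ (so no Taylor expansion near $z=0$ is needed), and that the paper simply discards the nonpositive $\bb$-stable contribution to $U_{c,\theta}$ at \eqref{T2} rather than keeping the $-cK_{\theta,\bb}(y-\tt y)^{\theta+1-\bb}$ bonus, which is indeed unnecessary for the conclusion.
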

\begin{proof}
Some non-trivial modifications  of the proof for Proposition \ref{P:es2} are required although the corresponding idea is
similar to that of Proposition \ref{P:P-1}. In the following,  we  highlight some key differences. For  $c>0$ and $\theta\in(0,2-(\aa\vee \bb)]$,  set
$$ U_{c,\theta}(s,t):=c\,(s+s^\theta),\quad s,t\ge0.$$
Below, we set $0\le\tt y< y$ and let $x,\tt x\in
 \R$. According to  \eqref{WW3},   we deduce
\begin{equation}\label{T2}
\begin{split}
&(L^*U_{c,\theta})(y-\tt y,|x-\tt x|)\\
&=-b\, c\, \{y-\tt y+\theta(y-\tt y)^\theta\}\\
&\quad+c\,(y -\tt y ) \int_0^\infty\left((y -\tt y+z)^\theta-(y-\tt
y)^\theta-\theta(y-\tt y)^{\theta-1}z\,\right)\,\nu_\bb(\d z)\\
&\le -b c \{y-\tt y+\theta(y-\tt y)^\theta\},
\end{split}
\end{equation}
where in the inequality we used
the fact  that for all $0\le \tt y<y$ and $z\ge0$,
$$(y -\tt y+z)^\theta-(y-\tt
y)^\theta-\theta (y-\tt y)^{\theta-1}z\le 0,\quad \theta\in(0,1)$$  thanks to
 the mean value theorem.
 Thus, \eqref{T1} follows provided that there exists a constant $C_1>0$ such that
\begin{align}\label{K1}
(L^*F)(y-\tt y,|x-\tt x|)
\le   -\lambda  |x-\tilde x|+ C_1\left( y-\tt y  +(y-\tt y)^\theta \right),
\end{align}
 by taking \eqref{T2} into account  and  choosing $c>0$ such that $cb \theta >C_1$. Nevertheless,  to derive \eqref{K1}, it suffices   to show that
there exists a constant $C_2>0$ such that
\begin{equation}\label{K2}
\begin{split}
(L^*F_\theta)(y-\tt y,|x-\tt x|)\le & -\lambda   |x-\tilde x| \I_{\{|x-\tt x|\ge 2(y-\tt y)\}}\\
&+ C_2\left(  y-\tt y  +(y-\tt y)^{2-\aa}+ (y-\tt y)^{2-\bb} \right).
\end{split}
\end{equation}
In fact, \eqref{K2} yields \eqref{K1} by noting that there exists a constant $C_3>0$ such that
\begin{align*}
&-\lambda  |x-\tilde x|\I_{\{|x-\tt x|\ge 2(y-\tt y)\}}+ C_2\left( y-\tt y  +(y-\tt y)^{2-\aa}+ (y-\tt y)^{2-\bb} \right)\\
&\le -\lambda  |x-\tilde x| +2\lambda  (y-\tt y) +C_2\left(  y-\tt y  +(y-\tt y)^{2-\aa}+ (y-\tt y)^{2-\bb} \right)\\
&\le -\lambda  |x-\tilde x|+ C_3\left( y-\tt y +(y-\tt y)^{2-(\aa\vee \bb)} \right)\\
&\le-\lambda  |x-\tilde x|+ C_3\left(  y-\tt y +(y-\tt y )\I_{\{y-\tt y\ge1\}}  +(y-\tt y)^\theta\I_{\{0<y-\tt y\le1\}}\right)\\
&\le   -\lambda  |x-\tilde x|+2C_3(  y-\tt y  +(y-\tt y)^\theta),
\end{align*}
 where in the second inequality we used, due to $\aa,\bb>1$,    $(y-\tt y)^{1 -\aa}\wedge(y-\tt y)^{1 -\bb}\le1$ for $y-\tt y\ge1$  and
 $
 1\le (y-\tt y)^{-\aa}\vee (y-\tt y)^{-\bb}\le (y-\tt y)^{-(\aa\vee\bb)}$  for $0<y-\tt y\le 1$, and in the last two inequality we utilized $\theta\in(0,2-(\aa\vee\bb)].$ Therefore, to obtain the desired assertion \eqref{T1}, we need only to verify \eqref{K2}.

 From \eqref{WW3}, we have
\begin{equation}\label{T3}
\begin{split}
&(L^*F)(y-\tt y,|x-\tt x|)\\
&=-b\partial_1F (y-\tt y,|x-\tt x|)(y-\tt
y) -\lambda \partial_2F(y-\tt y,|x-\tt x|)|x-\tt x|\\
&\quad +(y -\tt y ) \int_0^\infty\bigg( F(y -\tt y+z,|x-\tt
x|)-F(y-\tt
y,|x-\tt x|)   \\
&\qquad\qquad\qquad\qquad\qquad\quad-\partial_1F(y-\tt y,|x-\tt x|)z\bigg)\,\nu_\bb(\d z)\\
&\quad +  (y -\tt y )\int_0^\infty \bigg(F(y -\tt y,|x+z-\tt
x|)-F(y-\tt
y,|x-\tt x|)   \\
&\qquad\qquad\qquad\qquad\qquad\quad-\partial_2F(y-\tt
y,|x-\tt x|)\ff{(x-\tt
x)z}{|x-\tt x|}\bigg)\,\nu_\aa(\d z)\\
&=:\Upsilon_1+\Upsilon_2+(y-\tt y) \Upsilon_3+(y-\tt y)\Upsilon_4.
\end{split}
\end{equation}
Thus, \eqref{K2} is available once we
prove that there is a constant $C_4>0$ so that
\begin{equation}\label{K3}
\begin{split}
\Upsilon_1+\Upsilon_2+(y-\tt y)\Upsilon_4
&\le  -\lambda |x-\tilde x|\I_{\{|x-\tt x|\ge2(y -\tt y)\}}+ C_4\left(  y-\tt y  +(y-\tt y)^{2-\aa}\right),
\end{split}
\end{equation}
and
\begin{equation}\label{K4}
 \Upsilon_3\le C_4(y-\tt y)^{1-\bb}.
\end{equation}

In what follows, we aim to prove \eqref{K3} and \eqref{K4}, respectively.
Let {\bf (i)}-{\bf (iii)} be the three cases listed in the proof of Proposition \ref{P:P-1}.
By a close inspection of argument for  Proposition \ref{P:P-1},
there is a constant $c_1>0$ such that
\begin{equation}\label{T4}\begin{split}
 \Upsilon_2+(y-\tt y)\Upsilon_4
&\le  -\lambda  |x-\tilde x| \I_{\{|x-\tt x|\ge 2(y-\tt y) \}}+ c_1\left( y-\tt y +(y-\tt y)^{2-\aa}\right).\end{split}\end{equation}
From \eqref{E6}, it is easy to see  that $\Upsilon_1\le bc_0(y-\tt y)$. As a result, \eqref{K3} follows immediately.
Next, we turn to the proof of \eqref{K4}.

\medskip

\noindent {\bf Proof of \eqref{K4} for the case (i)}. For this case, it follows that
\begin{align*}
\Upsilon_3=& \int_{\{|x-\tt x|\le
y-\tt y+z\}}\left(  y -\tt y+z   -|x-\tt x|  \right)\,\nu_\bb(\d z)\\
&+\int_{\{y-\tt y+z<|x-\tt x|<2(y-\tt
y+z)\}}\left(1-g\left(\ff{|x-\tt x|}{ y -\tt y+z
}\right)\right) \left( y -\tt y+z  - |x-\tt x| )\right)\,\nu_\bb(\d
z)\\
=&:\Upsilon_{31}+\Upsilon_{32}.
\end{align*}
On one hand, since $y-\tt y+z<|x-\tt x|$ and $g\in[0,1]$, $\Upsilon_{32}\le 0$.
On the other hand,  we have
$$
\Upsilon_{31}\le\int_{\{z\ge y-\tt y \}}  (y -\tt y+z)
\,\nu_\bb(\d z) \le c_2(y-\tt y)^{1-\bb}
$$
for some constant $c_2>0$,
where we used  $|x-\tt
x|\ge2(y-\tt y)$ in the first inequality. Hence, we arrive at
$$\Upsilon_3\le c_2(y-\tt y)^{1-\bb}.$$
Whence, we infer that \eqref{K4} holds true for the case {\bf(i)}.

\medskip

\noindent{\bf Proof of \eqref{K4} for the case (ii)}. Indeed, with regard to this case, \eqref{K4} is available by using
\begin{equation*}
\begin{split}\Upsilon_3&=\int_{\{|x-\tt x|<y-\tt y+z\}}\bigg(\left(1-g\left(\ff{|x-\tt x|}{ y -\tt y+z
}\right)\right)(y-\tt y+z) -(y-\tt y)-z\bigg)\,\nu_\bb(\d z)\\
&\le\int_{\{|x-\tt x|<y-\tt y+z\}}( y-\tt y+z -(y-\tt y)-z)\nu_\bb(\d z)=0,\end{split}\end{equation*}
where the inequality is due to $g\in[0,1]$.

\medskip

\noindent{\bf Proof of \eqref{K4} for the case (iii)}. As for
this case, $\Upsilon_3$  can be rewritten as
\begin{align*}
\Upsilon_3&=\int_{\{z\le (y-\tt y)/2\}}\bigg( F(y -\tt
y+z,|x-\tt x|)\\
&\quad~~~~~~~~~~~~~~~~~~~~~-F(y-\tt
y,|x-\tt x|)-\partial_1F(y-\tt y,|x-\tt x|)z\bigg)\,\nu_\bb(\d z)\\
&\quad+\int_{\{z\ge (y-\tt y)/2\}}\bigg( F(y -\tt y+z,|x-\tt
x|)-F_\theta(y-\tt
y,|x-\tt x|)\\
&~~~~~~~~~~~~~~~~~~~~~~~~~~-\partial_1F(y-\tt y,|x-\tt x|)z\bigg)\,\nu_\bb(\d z)\\
&=:\Upsilon_{31}+\Upsilon_{32}.
\end{align*}
By Taylor's expansion, there exists a constant $\xi\in
[y-\tt y,3(y-\tt y)/2]$ such that
\begin{equation}\label{K5}
\Upsilon_{31}=\ff{1}{2}\partial_{11}F (\xi,|x-\tt x|)\int_{\{z\le
(y-\tt y)/2\}}z^2\,\nu_\bb(\d z)\le c_3(y-\tt y)^{1-\bb}
\end{equation}
for some constant $c_3>0,$ where the inequality above follows from \eqref{E6}. Furthermore, in terms of  the definition of $F $ and \eqref{E6}, there is   a constant  $c_4>0$ such that
\begin{equation}\label{K6}
\begin{split}
\Upsilon_{32}&\le (1+c_0)\int_{\{z\ge (y-\tt y)/2\}}\left( y -\tt y  +|x-\tt x| +2z\right)\,\nu_\bb (\d z)\\
&\le
3(1+c_0)\int_{\{z\ge (y-\tt y)/2\}}\left( y -\tt y  +2z\right)\nu_\bb(\d z)
\le c_4(y-\tt y)^{1-\bb},
\end{split}
\end{equation}
 where  the second inequality is owing  to  $y-\tt y\le  |x-\tt x|\le 2(y-\tt y)$. Consequently, for the case {\bf (iii)}, \eqref{K4} follows  by combining \eqref{K5} with \eqref{K6}.
\end{proof}

\begin{proof}[Proof of Theorem $\ref{T:main1}$] Similar to Theorem \ref{T:main}, we indeed can claim that
there exist a unique probability measure $\mu$ on $\R_+\times\R$ such that for any $\theta\in (0, 2-(\aa\vee\bb)]$, there is $\eta:=\eta(\theta)>0$ so that for all $y\in \R_+$, $x\in \R$ and $t>0$
$$W_{\psi_\theta}  (P(t, (y,x),\cdot), \mu)\le C(y,x,\theta)\e^{-\eta t},$$ where
$$\psi_\theta(u,v):=u+ u^{\theta}+ v,\quad u,v\ge0$$ and $C(y,x,\theta)>0$ is independent of $t$.
Note that, as far as \eqref{WW2} is concerned,
one can  check that \eqref{e:mom} still holds. With \eqref{e:mom} and Proposition \ref{P:es2}
at hand, the proof of the assertion above can be complete by implementing  the same argument of Theorem \ref{T:main}, and so we herein omit the corresponding details.
\end{proof}

\subsection{General two-factor affine processes}
In this part, we emphasize  that the approaches applied to  Theorems
\ref{T:main} and \ref{T:main1} still work for two other general two-factor affine processes. In particular, we can show the exponential ergodicity for the following two kinds of two-factor affine processes  with respect to the $L^1$-Wasserstein distance $W_1$:

\smallskip
Two-factor affine process (I):
 $$\begin{cases}
\d Y_t=(a-b Y_t)\,\d t+  Y_{t}^{{1}/{2}} \,\d B^{(1)}_t + Y_{t-}^{{1}/{\beta}}\,\d Z^{(\bb)}_t,&\, t\ge0,\,Y_0\ge0,\\
\d X_t=(\kappa-\lambda X_t)\,\d t+ Y_{t}^{{1}/{2}} \,(\rho\,\d B^{(1)}_t+ \sqrt{ 1-
\rho^2}\,\d B^{(2)}_t) + Y_{t-}^{1/\aa}\,\d Z^{(\aa)}_t,&\,t\ge0,\,X_0\in \R,
\end{cases} $$ where $a\ge0$, $\kappa\in \R$, $b,\lambda>0$, $\rho\in [-1,1]$, $\bb,\aa\in(1,2)$, $(B_t^{(1)},B_t^{(2)})_{t\ge0}$ is a 2-dimensional standard Brownian motion, $(Z^{(\bb)}_t)_{t\ge0}$ (resp.\ $(Z^{(\aa)}_t)_{t\ge0}$)  is a specially positive $\bb$-stable  (resp. $\aa$-stable) L\'evy process. Moreover, the processes $(B_t^{(1)},B_t^{(2)})_{t\ge0}$, $(Z^{(\bb)}_t)_{t\ge0}$ and $(Z^{(\aa)}_t)_{t\ge0}$ are mutually independent.

\smallskip

 Two-factor affine process (II):
$$\begin{cases}
\d Y_t=(a-b Y_t)\,\d t+  Y_{t}^{{1}/{2}} \,\d B_t,&\quad t\ge0,\,Y_0\ge0,\\
\d X_t=(\kappa-\lambda X_t-\gamma Y_t)\,\d t+ Y_{t-}^{1/\aa}\,\d Z_t,&\quad t\ge0,\,X_0\in \R,
\end{cases}
$$where $a\ge 0$, $b,\lambda>0$, $\gamma \in
\R,$ $(B_t)_{t\ge0}$ is a standard Brownian motion, and $(Z_t)_{t\ge0}$ is an independent
spectrally  positive $\aa$-stable process with $\aa\in (1,2]$.

We have proven the exponential ergodicity for the SDEs \eqref{WW1} and \eqref{WW2} (with respect to the $L^1$-Wasserstein distance $W_1$). Combining both arguments together, we can show that  the following two-factor affine process
\begin{equation}\label{e:WW4}\begin{cases}
\d Y_t=(a-b Y_t)\,\d t+  Y_{t}^{{1}/{2}} \,\d B_t+ Y_{t-}^{{1}/{\beta}}\d Z^{(\bb)}_t,&\quad t\ge0,\,Y_0\ge0,\\
\d X_t=(\kappa-\lambda X_t)\,\d t+ Y_{t-}^{1/\aa}\,\d Z^{(\aa)}_t,&\quad t\ge0,\,X_0\in \R
\end{cases}\end{equation}
is exponentially ergodic.
The difference between the SDE \eqref{e:WW4} and type (I) above is  that in (I) there is an additional diffusion term driven by Brownian motions for the second equation. Then, one can apply the synchronous coupling to this additional term and follow the argument showing that  \eqref{e:WW4} is exponentially ergodic to  derive  the exponential ergodicity of  the type (I).

The difference between the SDE \eqref{WW1} and type (II) above is due to the drift term in the second equation for the process $(Y_t)_{t\ge0}$ of the associated affine process. Concerning the two-factor affine process (II), we  can  also show that $(Y_t,X_t)$  is  exponentially ergodic  with respect to the $L^1$-Wasserstein distance $W_1$ by following the argument of Theorem \ref{T:main} if  the counterpart of Proposition \ref{P:P-1} is still valid,   see Proposition \ref{p3} below for more details.

To proceed, we introduce some additional notation.
Below, let $F$ be defined as in \eqref{EE3} but with
$g\in C_b^2(\R_+)$ such that $g'\ge0$ and
$$
g(r)=
\begin{cases}
0,&0\le r<1,\\
(r-1)^{2+\delta},&1<r<{3}/{2},\\
1,&r\ge\kk_0:=2(1+ |\gamma|/\lambda)
\end{cases}$$
for some constant $\delta>0$. With $F$ above at hand, the function $V_{c,\theta}$ is defined exactly as in \eqref{v:function}.

\begin{proposition}\label{p3}
There exist constants $c,\eta>0$ such that for any $y>\tilde y\ge0$
 and $x,\tilde x\in \R$,
$$
(L^*V_{c,\theta})(y-\tilde y,|x-\tilde x|)\le -\eta V_{c,\theta} (y-\tilde y,|x-\tilde x|),
$$
where $L^*$ is the coupling operator of the two-factor affine process $($II$)$ given in \eqref{L2}
 with $\kk-\lambda x$ replaced by $\kk-\lambda x-\gamma y$.

\end{proposition}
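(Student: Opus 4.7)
The plan is to mimic the proof of Proposition~\ref{P:P-1} line by line, isolating the single change produced by the additional drift term $-\gamma y$ (respectively $-\gamma\tilde y$) in the second equation of the two-factor affine process (II). Writing $V_{c,\theta}(s,t)=c(s+s^{\theta})+F(s,t)$ and splitting accordingly, one observes that the computation of $(L^{*}U_{c,\theta})$ with $U_{c,\theta}(s,t):=c(s+s^{\theta})$ is unaffected because $\partial_{2}U_{c,\theta}\equiv0$; its upper bound is identical to the one derived in Proposition~\ref{P:P-1}. All the novelty is therefore concentrated in $(L^{*}F)$, where the modified drift of $X_{t}-\tilde X_{t}$ contributes exactly one new summand, namely
$$-\gamma(y-\tilde y)\,\mathrm{sgn}(x-\tilde x)\,\partial_{2}F(y-\tilde y,|x-\tilde x|),$$
whose absolute value is at most $|\gamma|(y-\tilde y)\,|\partial_{2}F(y-\tilde y,|x-\tilde x|)|$.

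I would then split $\R_{+}\times\R$ according to the enlarged threshold $\kappa_{0}=2(1+|\gamma|/\lambda)$ into the three regions \textbf{(i$'$)} $|x-\tilde x|\ge\kappa_{0}(y-\tilde y)$, \textbf{(ii$'$)} $|x-\tilde x|\le y-\tilde y$, and \textbf{(iii$'$)} $y-\tilde y<|x-\tilde x|<\kappa_{0}(y-\tilde y)$, mirroring the cases in Proposition~\ref{P:P-1}. In case \textbf{(i$'$)}, $F(y-\tilde y,|x-\tilde x|)=|x-\tilde x|$ and $\partial_{2}F=1$, so the new term is bounded by $|\gamma|(y-\tilde y)\le(|\gamma|/\kappa_{0})|x-\tilde x|\le(\lambda/2)|x-\tilde x|$ by the definition of $\kappa_{0}$; combined with the main term $-\lambda|x-\tilde x|$ coming from \eqref{E5} this still leaves a strictly negative contribution $-(\lambda/2)|x-\tilde x|$, which plays the role that $-\lambda|x-\tilde x|$ played in the original argument. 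The jump estimates $I_{1},I_{2},I_{3}$ are redone with $2$ replaced by $\kappa_{0}$ throughout, but the bounds are of the same qualitative form $\le C(y-\tilde y)^{1-\alpha}$. In case \textbf{(ii$'$)}, $F(y-\tilde y,|x-\tilde x|)=y-\tilde y$ and $\partial_{2}F\equiv0$, so the extra term vanishes and the estimates on $J_{1},J_{2}$ go through verbatim. In case \textbf{(iii$'$)}, the analog of Lemma~\ref{L:lem1} (which only depends on $g$ being $C_{b}^{2}$ and on the range $1\le t/s\le\kappa_{0}$) still provides $|\partial_{2}F|\le c_{0}$, so the extra contribution is at most $|\gamma|c_{0}(y-\tilde y)$, which is absorbed into the $C(y-\tilde y)$ term on the right-hand side of \eqref{Y0}; the remaining terms $\Lambda_{1},\Lambda_{2}$ are handled exactly as in the proof of Proposition~\ref{P:P-1}.

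Piecing the three cases together yields the analog of \eqref{Y0} with $\lambda$ replaced by $\lambda/2$, and then choosing $c$ sufficiently large delivers the required dissipative estimate. The only real obstacle is the design of $\kappa_{0}$: the standard threshold $2$ is not large enough to dominate the cross term $|\gamma|(y-\tilde y)$ by $-\lambda|x-\tilde x|$, and one has to widen the "large-distance" regime (and correspondingly widen the support of the interpolation $g$) just enough so that the inequality $|\gamma|(y-\tilde y)\le(\lambda/2)|x-\tilde x|$ holds throughout region \textbf{(i$'$)}. Once this bookkeeping is done, no estimate of substantive new difficulty arises; in particular, the coupling-time argument used in the proof of Theorem~\ref{T:main} and the identity $Y_{t}=\tilde Y_{t}$ for $t\ge T_{Y}$ still give $\d(X_{t}-\tilde X_{t})=-\lambda(X_{t}-\tilde X_{t})\,\d t$ past $T_{Y}$ (the $-\gamma(Y_{t}-\tilde Y_{t})\,\d t$ contribution vanishes), so the remainder of the exponential-ergodicity argument transfers unchanged.
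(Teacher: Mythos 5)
Your proposal is correct and follows the same route as the paper's own proof: you isolate the single new drift contribution $-\gamma(y-\tilde y)\,\mathrm{sgn}(x-\tilde x)\,\partial_2 F$, enlarge the ``far'' regime to $|x-\tilde x|\ge\kappa_0(y-\tilde y)$ with $\kappa_0=2(1+|\gamma|/\lambda)$ (and widen $g$ accordingly) so that in case \textbf{(i$'$)} the bound $|\gamma|(y-\tilde y)\le(\lambda/2)|x-\tilde x|$ absorbs the new term into $-(\lambda/2)|x-\tilde x|$, observe that case \textbf{(ii$'$)} is untouched because $\partial_2F\equiv0$ there, and in case \textbf{(iii$'$)} absorb the extra $O(y-\tilde y)$ into the existing linear term using the analog of Lemma~\ref{L:lem1} with $2$ replaced by $\kappa_0$. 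This is precisely the paper's argument, including the unchanged treatment of $U_{c,\theta}$ and the final choice of $c$.
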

\begin{proof}
Below, we shall fix $y>\tt y\ge0$ and $x,\tt x\in\R$.
By following the argument  of Proposition of \ref{P:P-1}, to end the proof of Proposition \ref{p3}, it is sufficient to show that
\begin{equation}\label{L1}
\begin{split}
(L^*F)
(y-\tt y,|x-\tt x|)&\le -\ff{\lambda}{2}  |x-\tt
x|\I_{\{|x-\tt x|\ge\kk_0(y-\tt y)  \}}\\
&\quad + C    (y-\tt y) +
C(\ss y+\ss{\tt
y})^2(y-\tt y)^{-1}\I_{\{0<y-\tt y<1\}}
\end{split}
\end{equation}
for some constant $C>0.$
 In what follows, we are going to verify \eqref{L1} in terms of the  three cases below:
\begin{enumerate}
\item[{\bf(i)}]   $|x-\tt x|> \kk_0(y-\tt y) $;

\item[{\bf(ii)}] $|x-\tt x|\le y-\tt y$;

\item[{\bf(iii)}]$y-\tt y<|x-\tt x|\le \kk_0(y-\tt y)$.
\end{enumerate}
For the  setting {\bf (i)}, we have  $F(y-\tt y,|x-\tt x|)= |x-\tt x|$. This, in addition to  \eqref{E5} with   $-\lambda|x-\tt x|-\gamma(x-\tt x)(y-\tt y)/|x-\tt x|$ instead of $-\lambda|x-\tt x|$ therein, yields
\begin{align*}
&(L^*F) (y-\tt y,|x-\tt x|)\\
& =-\lambda |x-\tt x|-\frac{\gamma (x-\tt x)(y-\tt y)}{|x-\tt x|}+(y-\tt y)(I_1+I_2+I_3)\\
&\le -\lambda  |x-\tt x|+|\gamma| (y-\tt y)+(y-\tt y)(I_1+I_2+I_3)\\
&\le -\frac{\lambda}{2}   |x-\tt
x|+(y-\tt y)(I_1+I_2+I_3),
\end{align*}
in which  $I_1,I_2,I_3$ were introduced in \eqref{L3} with the factor $2$ in the splitting intervals replaced by the number $\kk_0$, and in the last display  we used $$|\gamma|(y-\tt y)\le \frac{\lambda}{2}|x-\tt x|.$$
Thus, combining the estimates on $I_1,I_2,I_3$, we infer that \eqref{L1} holds true for the case {\bf (i)}. Observe that, as for the case  {\bf (ii)}, $L^*F$ shares the same expression with the counterpart for the setup {\bf (ii)}  in the proof of Proposition of \ref{P:P-1}. Subsequently, with some mild modifications of the associated details (where, in particular, replace the factor $2$ in the splitting intervals by $\kk_0$), we conclude that \eqref{L1} is still available for the case {\bf (ii)}. Note that Lemma \ref{L:lem1} is still valid with the number $2$ in \eqref{E6} replaced by $\kk_0 $ so that \eqref{E0} remains true for the new expression  $F.$ As a result, \eqref{L1} follows by carrying out the same argument to derive \eqref{PP} for the case {\bf (iii)}.
\end{proof}

  We mention that,  applying the similar idea, one can also prove the exponential ergodicity  with respect to the $L^1$-Wasserstein distance $W_1$ for type (II) if the first equation is replaced by
$$\d Y_t=(a-b Y_t)\,\d t+  Y_{t-}^{{1}/{\beta}}\,\d Z^{(\bb)}_t,\quad t\ge0,\,Y_0\ge0$$ or
$$\d Y_t=(a-b Y_t)\,\d t+  Y_{t}^{{1}/{2}} \,\d B_t+ Y_{t-}^{{1}/{\beta}}\,\d Z^{(\bb)}_t,\quad t\ge0,\,Y_0\ge0,$$ where $a\ge 0$, $b>0$, $(B_t)_{t\ge0}$ is a standard Brownian motion, and $(Z_t)_{t\ge0}$ is an independent
spectrally  positive $\aa$-stable process with $\aa\in (1,2]$.

\subsection{Beyond two-factor affine processes}\label{subsection4.3}
In this part, we will briefly mention that our method also works for the following models beyond two-factor affine processes.
Let $(Y_t,X_t)_{t\ge0}$ be a time-homogeneous Markov process on $\R_+\times \R$
such that
\begin{equation}\label{e:ext}
\begin{cases}
\d Y_t=b_1(Y_t)\,\d t+  Y_{t-}^{{1}/{\bb}} \,\d L_t,&\quad t\ge0,\,Y_0\ge0,
\\
\d X_t=b_2(X_t)\,\d t+ Y_{t-}^{1/\aa}\,\d Z_t,&\quad t\ge0,\, X_0\in \R,
\end{cases}
\end{equation} where $(L_t)_{t\ge0}$ is a spectrally positive $\bb$-stable process with $\bb\in (1,2]$, $(Z_t)_{t\ge0}$ is an independent
spectrally  positive $\aa$-stable process with $\aa\in (1,2]$, and  $b_1(x)$ (resp.\ $b_2(x)$) is continuous on $\R_+$ (resp.\ $\R$) so that there are constants $\lambda_i>0$ $(i=1,2)$ such that for $i=1,2$ and any $x>y$,
\begin{equation}\label{e:fff}b_i(x)-b_i(y)\le -\lambda_i (x-y).\end{equation} Note that, according to \cite[Theorem 5.6]{FL10}, there exists a unique strong solution to the first component $(Y_t)_{t\ge0}$ of the SDE \eqref{e:ext}. Once $(Y_t)_{t\ge0}$ is fixed, the unique strong solution to the second component $(X_t)_{t\ge0}$ is guaranteed by the monotone condition \eqref{e:fff} for the drift term $b_2(x)$. Therefore, the SDE \eqref{e:ext} has the unique strong solution $(Y_t,X_t)_{t\ge0}$. Furthermore, according to \cite[Corollary 2.3 and Remark 2.4]{LW}, the coupling of the first component $(Y_t)_{t\ge0}$ can be chosen to preserve the
order property. With these facts and \eqref{e:fff} again at hand, we can use
the ideas of proofs for Theorems \ref{T:main} and \ref{T:main1} to conclude that the process $(Y_t,X_t)_{t\ge0}$ is exponentially ergodic with respect to the $L^1$-Wasserstein distance $W_1$. Note that the contractive property like \eqref{Y6---} in the proof of Theorem \ref{T:main} is a consequence of \eqref{e:fff} for the drift term $b_2(x)$. As we mentioned before, since all
known approaches dealing
with the ergodicity of affine processes
depend on their
especially structural characterizations, they seem to be invalid in establishing the exponential ergodicity of the SDE \eqref{e:ext}.

\bigskip

\noindent \textbf{Acknowledgements.}
The research of Jianhai Bao is supported by the National Natural Science Foundation of China (Nos.\ 11771326, 11831014).
The research of Jian Wang is supported by the National Natural Science Foundation of China (No.\ 11831014),
the Program for Probability and Statistics:
Theory and Application (No.\ IRTL1704)
and the Program for Innovative Research Team in Science and Technology
in Fujian Province University (IRTSTFJ).


\begin{thebibliography}{99}

\bibitem{A}  Alfonsi, A.: \emph{Affine Diffusions and Related Processes: Simulation, Theory and Applications}, Springer, Cham; Bocconi University Press, Milan, 2015.


\bibitem{BDLP14} Barczy, M., D\"oring, L., Li, Z. and Pap, G.:
Parameter estimation for a subcritical affine two factor model, {\it
J. Statist. Plann. Inference}, {\bf 151/152} (2014), 37--59.


\bibitem{BDLP} Barczy, M., D\"oring, L., Li, Z. and Pap, G.:
Stationarity and ergodicity for an affine two-factor model, {\it
Adv. Appl. Probab.}, {\bf 46} (2014), 878--898.

\bibitem{BP} Bolyog, B. and Pap, G.: Conditions for stationarity and
ergodicity of two-factor affine diffusions, {\it Commun. Stoch.
Anal.}, {\bf 10} (2016), 587--610.



\bibitem{BP19} Bolyog, B. and Pap, G.: On conditional least squares estimation for affine diffusion based on continuous obseravtions, {\it Stat. Inference Stoch. Process.}, {\bf 22} (2019), 41--75.


\bibitem{CPGU}
Caballero, M.E., Garmendia, J.L.P and Uribe Bravo, G.: Affine processes on
$\R^m_+\times \R^m$ and multiparameter time changes, {\it Ann. Inst. Henri Poincar\'e Probab. Stat.}, {\bf13} (2017), 1280--1304.



\bibitem{CJ}Chen, H. and Joslin, S.: Generalized transform analysis of
affine processes and applications in finance, \emph{Rev. Financial
Stud.}, {\bf 25} (2012), 2225--2256.


\bibitem{CIR} Cox, J.C., Ingersoll, J.E. and Ross S.A.: A theory of
the term structure of interest rates, {\it Econometrica,} {\bf 53}
(1985), 385--407.








\bibitem{DFS}Duffie, D., Fillipovi\'{c}, D. and Schachermayer, W.: Affine
processes and applications in finance, {\it Ann. Appl. Probab.},
{\bf 13} (2003), 984--1053.



\bibitem{DPS} Duffie, D., Pan, J. and Singleton, K.: Transform analysis
and asset pricing for affine jump-diffusions, {\it Econometrics},
{\bf 68} (2000), 1343--1376.


\bibitem{FMS} Filipovi\'{c}, D., Mayerhofer, E. and Schneider, P.: Density approximations for multivariate
affine jump-diffusion processes, {\it J. Econometrics}, {\bf176} (2013), 93--111.


\bibitem{FJKR}
Friesen, M., Jin, P., Kremer, J. and R\"udiger, B.: Exponential ergodicity for stochastic
equations of nonnegative processes with jumps, arXiv:1902.02833

\bibitem{FJKR1}
Friesen, M., Jin, P., Kremer, J. and R\"udiger, B.: Ergodicity of affine processes on the cone of symmetric positive semidefinite matrices, to appear in {\it Adv. Appl. Probab.}, see also arXiv:1905.06111


\bibitem{FJR}
Friesen, M., Jin, P. and R\"udiger, B.:  Stochastic equation and exponential ergodicity in Wasserstein
distances for affine processes, to appear in {\it Ann. Appl. Probab.}, see also arXiv:1901.05815


\bibitem{FL10}
Fu, Z. and Li, Z.: Stochastic equations of non-negative processes with jumps, \textit{Stochastic Process. Appl.} \textbf{120} (2010), 306--330.

\bibitem{JKR}
Jin, P., Kremer, J. and R\"udiger, B.: Exponential
ergodicity of an affine two-factor model based on the $\alpha$-root
process, {\it Adv. Appl. Probab.}, {\bf 49} (2017), 1144--1169.

\bibitem{JKR1}
Jin, P., Kremer, J. and R\"udiger, B.:  Existence of limiting distribution for affine processes, {\it J. Math. Anal. Appl.}, {\bf 486} (2020), paper no. 123912

\bibitem{JRT} Jin, P., R\"udiger, B. and Trabelsi, C.: Positive Harris
recurrence and exponential ergodicity of the basic affine
jump-diffusion, {\it Stoch. Anal. Appl.}, {\bf 34} (2016), 75--95.


\bibitem{KM} Keller-Ressel, M. and Mayerhofer, E.: Exponential moments of affine processes,
{\it Ann. Appl. Probab.}, {\bf25} (2015), 714--752.


\bibitem{KRST1} Keller-Ressel, M., Schachermayer, W. and Teichmann, J.: Affine processes are regular,
{\it Probab. Theory Related Fields},  {\bf151} (2011), 591--611.



\bibitem{KRST2} Keller-Ressel, M., Schachermayer, W. and Teichmann, J.: Regularity of affine processes
on general state spaces, {\it Electron. J. Probab.}, {\bf 18} (2013), paper no. 43, 17 pp.



\bibitem{LW} Li, P.-S. and Wang, J.: Exponential ergodicity for general
continuous-state nonlinear branching processes, arXiv:1909.06277

\bibitem{LM} Li, Z. and Ma, C.: Asymptotic properties of estimators in a
stable Cox-Intersoll-Ross model, {\it Stoch. Proc. Appl.},
{\bf 125} (2015), 3196--3233.
 
\bibitem{Lwcw} Luo, D. and Wang, J.: Refined basic couplings and Wasserstein-type distances for SDEs with L\'evy noises,  \emph{Stoch. Proc. Appl.}, {\bf129} (2019), 3129--3173.





\bibitem{Maj} Majka, M.B.: Coupling and exponential ergodicity for stochastic differential equations driven by L\'evy processes, \emph{Stoch. Proc. Appl.}, {\bf127} (2017), 4083--4125.


 

\bibitem{Masuda} Masuda, H.: On multidimensional Ornstein-Uhlenbeck processes driven by a general L\'evy process, \emph{Bernoulli}, {\bf  10} (2004),  97-120.



\bibitem{MS} Mayerhofer, E., Stelzer, R. and Vestweber, J.: Geometric ergodicity of affine processes on
cones, to appear in \emph{Stoch. Proc. Appl.}, see also arXiv:1811.10542


\bibitem{MT}
Meyn, S.P. and Tweedie, R.T.: Stability of Markovian processes III: Foster--Lyapunov criteria for comtinuous time processes, \emph{Adv. Appl. Probab.} \textbf{25} (1993), 518--548.



\bibitem{SY} Sato, K. and Yamazato, M.: Operator-self-decomposable distributions as limit distributions of processes
of Ornstein-Uhlenbeck type, \emph{Stoch. Proc. Appl.}, {\bf17} (1984),  73--100.

\bibitem{W1} Wang, J.: On the exponential ergodicity of L\'evy-driven Ornstein-Uhlenbeck processes, \emph{J. Appl.
Probab.}, \textbf{49} (2012), 990--1004.

\end{thebibliography}
\end{document}